	\def\MR#1{}
\title[Nonnegativity of the CR Paneitz operator]{Nonnegativity of the CR Paneitz operator for embeddable CR manifolds}
\author{Yuya Takeuchi}
\address{Department of Mathematics \\ Graduate School of Science \\ Osaka University
	\\ 1-1 Machikaneyama-cho, Toyonaka, Osaka 560-0043, Japan}
\email{yu-takeuchi@cr.math.sci.osaka-u.ac.jp, yuya.takeuchi.math@gmail.com}
\subjclass[2010]{Primary 32V20; Secondary 32V15, 32V30, 53C55}
\keywords{CR Paneitz operator, CR pluriharmonic function, CR $Q$-curvature, asymptotically complex hyperbolic manifold, Szeg\H{o} kernel}
\thanks{This work was supported by JSPS Research Fellowship for Young Scientists
and JSPS KAKENHI Grant Number JP19J00063.}
\begin{document}

\begin{abstract}
	The nonnegativity of the CR Paneitz operator
	plays a crucial role in three-dimensional CR geometry.
	In this paper,
	we prove this nonnegativity for embeddable CR manifolds.
	This result gives an affirmative solution to the CR Yamabe problem
	for embeddable CR manifolds.
	We also show the existence of a contact form with zero CR $Q$-curvature
	and generalize the total $Q$-prime curvature to embeddable CR manifolds
	with no pseudo-Einstein contact forms.
	Furthermore,
	we discuss the logarithmic singularity of the Szeg\H{o} kernel.
\end{abstract}

\maketitle

\section{Introduction} \label{section:introduction}

For three-dimensional strictly pseudoconvex CR manifolds,
the nonnegativity of the CR Paneitz operator has been of great importance;
it is deeply connected to global embeddability~\cite{Chanillo-Chiu-Yang2012}
and the CR positive mass theorem~\cite{Cheng-Malchiodi-Yang2017}.
It is known that
there are closed non-embeddable CR manifolds of dimension three
with not nonnegative CR Paneitz operator;
see~\cite{Cheng-Malchiodi-Yang2017}*{Section 4.1} for example.
Hence it is natural to ask whether any closed embeddable CR three-manifold has nonnegative CR Paneitz operator.
In this direction,
Chanillo, Chiu, and Yang~\cite{Chanillo-Chiu-Yang2013} have proved that
the nonnegativity condition is stable under sufficiently small deformations
of strictly pseudoconvex real hypersurfaces in $\mathbb{C}^{2}$.
Moreover,
Case, Chanillo, and Yang~\cite{Case-Chanillo-Yang2016} have shown that
the nonnegativity of the CR Paneitz operator is preserved under real-analytic embeddable deformations
with uniform positivity of the Tanaka-Webster scalar curvature and the stability of the CR pluriharmonic functions.
In this paper,
we prove that the CR Paneitz operator is nonnegative for embeddable CR manifolds.
Indeed:

\begin{theorem}
\label{thm:nonnegativity-of-CR-Paneitz-operator}
	Let $(M, T^{1, 0} M)$ be a closed embeddable strictly pseudoconvex CR manifold of dimension three.
	Then the CR Paneitz operator is nonnegative,
	and its kernel consists of CR pluriharmonic functions.
\end{theorem}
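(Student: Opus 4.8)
The plan is to study the quadratic form $u\mapsto\int_M u\,Pu\,\theta\wedge d\theta$ on real-valued functions and to show that it is nonnegative with null space exactly the space $\mathcal{P}$ of CR pluriharmonic functions. Fix a contact form $\theta$, its Reeb field $T$, and write $\bar\partial_b$ for the tangential Cauchy--Riemann operator, $\square_b=\bar\partial_b^{*}\bar\partial_b$ for the Kohn Laplacian on functions, and $\overline{\square}_b$ for its conjugate; recall that $\square_b-\overline{\square}_b$ is, up to a constant, $iT$. The first, essentially local, reduction is that every CR pluriharmonic function lies in $\ker P$, which follows from the standard factorization of $P$ through the third-order operator characterizing CR pluriharmonic functions. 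It then remains to prove (i) $\int_M u\,Pu\ge 0$ for all real $u$, and (ii) that equality forces $u\in\mathcal P$. Integrating by parts, I would rewrite $\int_M u\,Pu$ as a universal combination of $\|\square_b u\|^{2}$, $\|\overline{\square}_b u\|^{2}$, $\|Tu\|^{2}$ and pairings involving the pseudohermitian torsion $A$; the offending feature is that the Reeb and torsion contributions are sign-indefinite, which is exactly why $P$ can fail to be nonnegative on non-embeddable manifolds.

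The essential input is embeddability, which I would use through the closed range of $\bar\partial_b$. Equivalently, $0$ is isolated in the spectrum of $\square_b$, its kernel is the Hardy space $H^{2}=\ker\square_b$ of boundary values of holomorphic functions, and the Szeg\H{o} projection $S$ onto $H^{2}$ is a pseudodifferential operator in the sense of Boutet de Monvel; the same holds for $\overline{\square}_b$ with kernel $\overline{H^{2}}$. Concretely, on the orthogonal complement $(H^{2})^{\perp}=\overline{\operatorname{Ran}\square_b}$ the operator $\square_b$ is bounded below, and it is this spectral gap that will dominate the indefinite Reeb--torsion term. I would decompose $u$ using $S$ and its conjugate: the components lying in $H^{2}$ and $\overline{H^{2}}$ are (anti-)CR, so the principal part of the form vanishes there and the residual contribution is carried by the torsion pairing, while the part orthogonal to both is controlled by the gap.

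The technical heart, and the step I expect to be the main obstacle, is the estimate that converts the closed-range property into genuine control of the sign-indefinite term. The algebraic identity $4\,\overline{\square}_b\square_b=\Delta_b^{2}+T^{2}+i[\Delta_b,T]$, where $\Delta_b=\square_b+\overline{\square}_b$ is the sublaplacian, isolates the commutator $i[\Delta_b,T]$, a torsion expression, as the only obstruction to nonnegativity of the principal part; I would combine it with the Bochner-type identities for $A$ and the spectral gap to absorb this term. Making this quantitative, in particular ensuring uniformly in $u$ that the torsion contributions cannot overwhelm the gap, is where the Boutet de Monvel calculus for $S$ and the precise structure of the logarithmic singularity of the Szeg\H{o} kernel should enter, and is the crux of the argument.

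Finally, for the null space I would argue that if $\int_M u\,Pu=0$, then the estimate of the previous step forces the manifestly nonnegative pieces to vanish, and hence that the relevant $\bar\partial_b$-data vanish; solving the associated $\bar\partial_b$-equation using the closed range of $\bar\partial_b$ then realizes $u$ locally as the real part of a CR function, so that $u\in\mathcal P$. This gives $\ker P\subseteq\mathcal P$, and together with the reverse inclusion already noted, $\ker P$ consists exactly of the CR pluriharmonic functions.
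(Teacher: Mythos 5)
The step you yourself single out as the crux---converting the closed-range property into control of the sign-indefinite Reeb--torsion term---is a genuine gap, and it is precisely where this intrinsic strategy is known to stall. Your algebraic identity for $4\,\overline{\square}_b\square_b$ is fine, but embeddability gives only the \emph{existence} of a spectral gap for the Kohn Laplacian on the complement of the Hardy space; it comes with no quantitative lower bound comparable to the size of the torsion, so there is no absorption inequality, uniform in $u$, by which the gap dominates the $i[\Delta_b,T]$ contribution. Your decomposition also leaks: $H^{2}$ and $\overline{H^{2}}$ are not orthogonal (they share the constants, and $\langle f,\overline{g}\rangle$ need not vanish for $f,g$ holomorphic), and the torsion pairing couples all three pieces, so the ``residual contribution'' is not confined to the Hardy components. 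This is exactly the regime in which Chanillo--Chiu--Yang and Case--Chanillo--Yang worked, and spectral arguments of this kind are known to yield only \emph{perturbative} statements (stability of nonnegativity under small embeddable deformations, with auxiliary positivity hypotheses), because the absorption step degenerates in general. Invoking the Boutet de Monvel calculus and the logarithmic singularity of the Szeg\H{o} kernel ``at the crux'' is therefore a placeholder rather than an argument (and, as an aside, the Szeg\H{o} projection is a Fourier integral operator with complex phase, not a pseudodifferential operator in the classical sense). The equality case inherits the same defect: since $\int_M u(P_\theta u)\,\theta\wedge d\theta$ is not exhibited as a sum of squares, there are no ``manifestly nonnegative pieces'' whose vanishing you can extract.

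The paper avoids all of this by going extrinsic, in the manner of Siu--Sampson. By Harvey--Lawson and Lempert, $M$ bounds a strictly pseudoconvex domain $\Omega$ in a projective surface $X$; one sets $\omega_{+} = N\omega - 2\,dd^{c}\log(-\rho)$, an asymptotically complex hyperbolic K\"ahler form, and solves the Dirichlet problem $\Delta_{+}\widetilde{u} = 0$, $\widetilde{u}|_{M} = u$, with expansion $\widetilde{u} = F + G\rho^{2}\log(-\rho)$ (\cref{prop:Dirichlet-problem}, via the Epstein--Melrose--Mendoza inverse). K\"ahlerness turns harmonicity into primitivity, $dd^{c}\widetilde{u}\wedge\omega_{+} = 0$, which forces the \emph{pointwise} sign $dd^{c}\widetilde{u}\wedge dd^{c}\widetilde{u} \leq 0$; Stokes' theorem together with \cref{lem:cohomological-expression-of-CR-Paneitz} then gives $\int_{M} u (P_{\theta} u)\,\theta\wedge d\theta = -\int_{\Omega} dd^{c}\widetilde{u}\wedge dd^{c}\widetilde{u} \geq 0$, and equality forces $\widetilde{u}$ to be pluriharmonic, hence $d d^{c}_{\mathrm{CR}} u = 0$ and $u \in \mathcal{P}$ by Lee's characterization---so the indefinite torsion term is never estimated; it is geometrized away. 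The closed-range/Szeg\H{o} technology you rely on does enter the paper, but only later (Hsiao's theorem) for the zero CR $Q$-curvature problem, not for nonnegativity. To salvage your intrinsic route you would need a new coercive identity for $P_{\theta}$ valid on all embeddable CR three-manifolds, which at present is not known.
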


Note that the kernel of the CR Paneitz operator has been studied by
Case, Chanillo, and Yang~\cites{Case-Chanillo-Yang2015,Case-Chanillo-Yang2016}
for deformations of embeddable CR three-manifolds.

Here we give an outline of the proof of \cref{thm:nonnegativity-of-CR-Paneitz-operator}
given in \cref{section:CR-Paneitz-operator};
our proof is inspired by that of the Siu-Sampson result for harmonic maps~\cites{Siu1980,Sampson1986}
in~\cite{Amoros-Burger-Corlette-Kotschick-Toledo1996}*{Chapter 6.1}.
Let $(M, T^{1, 0} M)$ be as in \cref{thm:nonnegativity-of-CR-Paneitz-operator}.
From \cites{Harvey-Lawson1975,Lempert1995},
it follows that $M$ bounds a strictly pseudoconvex domain $\Omega$ in a two-dimensional complex projective manifold $X$.
By using a K\"{a}hler form on $X$ and a defining function of $\Omega$,
we have an asymptotically complex hyperbolic K\"{a}hler form $\omega_{+}$ on $\Omega$;
see \cref{section:CR-Paneitz-operator} for details.
For $u \in C^{\infty}(M)$,
take a harmonic extension $\widetilde{u}$ of $u$ to $\Omega$.
Since $\omega_{+}$ is K\"{a}hler,
$\widetilde{u}$ satisfies $d d^{c} \widetilde{u} \wedge \omega_{+} = 0$,
where $d^{c} = (\sqrt{- 1} / 2)(\overline{\partial} - \partial)$.
Then we have $d d^{c} \widetilde{u} \wedge d d^{c} \widetilde{u} \leq 0$,
and so
\begin{equation}
	\int_{M} d^{c} \widetilde{u} \wedge d d^{c} \widetilde{u}
	= \int_{\Omega} d d^{c} \widetilde{u} \wedge d d^{c} \widetilde{u}
	\leq 0.
\end{equation}
On the other hand,
integration by parts on $M$ yields that
\begin{equation}
	\int_{M} d^{c} \widetilde{u} \wedge d d^{c} \widetilde{u}
	= - \int_{M} u (P_{\theta} u) \theta \wedge d \theta,
\end{equation}
where $\theta$ is a contact form on $M$
and $P_{\theta}$ is the CR Paneitz operator.
Thus we obtain the nonnegativity of $P_{\theta}$.
Moreover,
if the equality holds,
then $\widetilde{u}$ must be pluriharmonic,
and $u$ is CR pluriharmonic.

\cref{thm:nonnegativity-of-CR-Paneitz-operator} has various applications.
Let $(M, T^{1, 0} M)$ be a closed strictly pseudoconvex CR manifold of dimension three.
The \emph{CR Yamabe constant} $\mathcal{Y}(M, T^{1, 0} M)$ is defined by
\begin{equation}
	\mathcal{Y}(M, T^{1, 0} M)
	= \inf_{\theta} \Set{\int_{M} \Scal \cdot \theta \wedge d \theta | \int_{M} \theta \wedge d \theta = 1}.
\end{equation}
This value gives a CR invariant of $(M, T^{1, 0} M)$.
\cref{thm:nonnegativity-of-CR-Paneitz-operator}
and~\cite{Chanillo-Chiu-Yang2012}*{Theorem 1.4(b)} imply the following embeddability criterion.

\begin{corollary}
\label{cor:embeddability-criterion}
	Let $(M, T^{1, 0} M)$ be a closed strictly pseudoconvex CR manifold of dimension three
	with positive CR Yamabe constant.
	Then $(M, T^{1, 0} M)$ is embeddable
	if and only if the CR Paneitz operator is nonnegative.
\end{corollary}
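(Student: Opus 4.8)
The plan is to establish the two implications of the biconditional separately, each by appealing to a single result. Throughout, $(M, T^{1, 0} M)$ is a closed strictly pseudoconvex CR three-manifold with $\mathcal{Y}(M, T^{1, 0} M) > 0$.

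First I would treat the ``only if'' direction. Suppose that $(M, T^{1, 0} M)$ is embeddable. Then \cref{thm:nonnegativity-of-CR-Paneitz-operator} applies verbatim and yields that the CR Paneitz operator is nonnegative. Note that the positivity of the CR Yamabe constant plays no role in this direction; embeddability alone suffices.

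Next I would treat the ``if'' direction, which is where the hypothesis on the Yamabe constant enters. Suppose that the CR Paneitz operator is nonnegative. Combining this with $\mathcal{Y}(M, T^{1, 0} M) > 0$, I would invoke \cite{Chanillo-Chiu-Yang2012}*{Theorem 1.4(b)}, whose hypotheses are precisely nonnegativity of the CR Paneitz operator together with positivity of the CR Yamabe constant, and whose conclusion is that $(M, T^{1, 0} M)$ is embeddable. This closes the equivalence.

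Since the statement is a direct synthesis of two established results, there is no substantial analytic obstacle. The only point requiring care is to confirm that the notion of nonnegativity used in \cref{thm:nonnegativity-of-CR-Paneitz-operator} coincides with the one assumed in \cite{Chanillo-Chiu-Yang2012}, and that the positivity of $\mathcal{Y}(M, T^{1, 0} M)$ matches the curvature-positivity hypothesis appearing there; both amount to standard facts about the conformal invariance of these quantities under change of contact form.
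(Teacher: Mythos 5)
Your proposal is correct and matches the paper's own argument exactly: the paper deduces \cref{cor:embeddability-criterion} by combining \cref{thm:nonnegativity-of-CR-Paneitz-operator} for the ``only if'' direction with \cite{Chanillo-Chiu-Yang2012}*{Theorem 1.4(b)} for the ``if'' direction, just as you do. Your closing remark about verifying that the notions of nonnegativity and the positivity hypothesis agree is a reasonable sanity check but involves nothing beyond what the paper takes for granted.
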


It is interesting to ask whether we can remove the positivity of the CR Yamabe constant
from the assumption of \cref{cor:embeddability-criterion}.

It is known~\cite{Jerison-Lee1987} that
the CR Yamabe constant satisfies
\begin{equation}
	\mathcal{Y}(M, T^{1, 0} M)
	\leq \mathcal{Y}(S^{3}, T^{1, 0} S^{3}),
\end{equation}
where $(S^{3}, T^{1, 0} S^{3})$ is the standard CR sphere in $\mathbb{C}^{2}$;
note that $\mathcal{Y}(S^{3}, T^{1, 0} S^{3}) > 0$.
Combining \cref{thm:nonnegativity-of-CR-Paneitz-operator} with
a consequence of the CR positive mass theorem~\cite{Cheng-Malchiodi-Yang2017}*{Theorem 1.2},
we have the following

\begin{corollary}
\label{cor:CR-Yamabe-constant}
	Let $(M, T^{1, 0} M)$ be a closed embeddable three-dimensional strictly pseudoconvex CR manifold.
	Then $\mathcal{Y}(M, T^{1, 0} M) = \mathcal{Y}(S^{3}, T^{1, 0} S^{3})$
	if and only if $(M, T^{1, 0} M)$ is CR equivalent to $(S^{3}, T^{1, 0} S^{3})$.
\end{corollary}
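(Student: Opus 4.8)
The plan is to split the biconditional into its two implications, with essentially all of the content residing in the reverse direction; the forward direction is a soft invariance statement. For the forward direction, suppose $(M, T^{1, 0} M)$ is CR equivalent to $(S^{3}, T^{1, 0} S^{3})$. A CR diffeomorphism carries contact forms to contact forms and preserves both the total Tanaka-Webster scalar curvature $\int_{M} \Scal \cdot \theta \wedge d\theta$ and the total volume $\int_{M} \theta \wedge d\theta$; hence it leaves the defining infimum for $\mathcal{Y}$ unchanged, and $\mathcal{Y}(M, T^{1, 0} M) = \mathcal{Y}(S^{3}, T^{1, 0} S^{3})$.

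For the reverse direction I would argue by contraposition, using the strict Yamabe inequality produced by the CR positive mass theorem. Assume $\mathcal{Y}(M, T^{1, 0} M) = \mathcal{Y}(S^{3}, T^{1, 0} S^{3})$. Since $\mathcal{Y}(S^{3}, T^{1, 0} S^{3}) > 0$, the CR Yamabe constant of $M$ is positive, so the CR Yamabe problem admits a solution: a contact form $\theta$ of constant positive Tanaka-Webster scalar curvature realizing $\mathcal{Y}(M, T^{1, 0} M)$. Moreover, by \cref{thm:nonnegativity-of-CR-Paneitz-operator} the CR Paneitz operator $P_{\theta}$ is nonnegative. These two conditions, nonnegativity of $P_{\theta}$ together with positivity of the Yamabe constant, are precisely the hypotheses needed to run the blow-up construction of Cheng, Malchiodi, and Yang: the Green's function of the CR Yamabe operator at a point of $M$ is positive and defines an asymptotically Heisenberg CR structure on the complement, whose CR mass is a well-defined invariant.

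The decisive input is then \cite{Cheng-Malchiodi-Yang2017}*{Theorem 1.2}, a consequence of the CR positive mass theorem: under these hypotheses the CR mass $m$ is nonnegative, with $m = 0$ if and only if $(M, T^{1, 0} M)$ is CR equivalent to $(S^{3}, T^{1, 0} S^{3})$, and a test-function estimate built from the Green's function yields the strict inequality $\mathcal{Y}(M, T^{1, 0} M) < \mathcal{Y}(S^{3}, T^{1, 0} S^{3})$ whenever $m > 0$. Thus, if $M$ were not CR equivalent to the sphere, we would have $m > 0$ and hence a strict inequality, contradicting our equality assumption. Therefore $m = 0$ and the rigidity part of the positive mass theorem identifies $(M, T^{1, 0} M)$ with $(S^{3}, T^{1, 0} S^{3})$.

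The main obstacle, and indeed the reason this rigidity statement was not previously available in full generality, is the verification that $P_{\theta}$ is nonnegative: this is exactly the hypothesis that \cite{Cheng-Malchiodi-Yang2017}*{Theorem 1.2} requires and cannot itself supply, and it is furnished here by \cref{thm:nonnegativity-of-CR-Paneitz-operator} for every closed embeddable CR three-manifold. Once this hypothesis is discharged, the equality case is governed entirely by the cited positive mass theorem together with the Jerison-Lee bound $\mathcal{Y}(M, T^{1, 0} M) \leq \mathcal{Y}(S^{3}, T^{1, 0} S^{3})$, and no additional analysis is needed.
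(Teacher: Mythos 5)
Your proposal is correct and follows essentially the same route as the paper, which obtains this corollary precisely by combining \cref{thm:nonnegativity-of-CR-Paneitz-operator} with \cite{Cheng-Malchiodi-Yang2017}*{Theorem 1.2}, the forward direction being the CR invariance of $\mathcal{Y}$. One small caution: your intermediate claim that a CR Yamabe minimizer exists is not justified at that stage (Jerison--Lee existence requires the strict inequality $\mathcal{Y}(M, T^{1, 0} M) < \mathcal{Y}(S^{3}, T^{1, 0} S^{3})$, and the paper deduces solvability in the equality case only \emph{after} this corollary, in \cref{thm:CR-Yamabe-problem}), but it is harmless here since the blow-up argument of \cite{Cheng-Malchiodi-Yang2017} needs only positivity of the CR Yamabe constant and nonnegativity of the CR Paneitz operator, both of which you verify.
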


The \emph{CR Yamabe problem}
is to find a contact form $\theta$ satisfying
\begin{equation} \label{eq:Yamabe-problem}
	\Scal
	= \mathcal{Y}(M, T^{1, 0} M),
	\qquad
	\int_{M} \theta \wedge d \theta
	= 1;
\end{equation}
such a contact form is called a \emph{CR Yamabe contact form}.
Recently,
Cheng, Malchiodi, and Yang~\cite{Cheng-Malchiodi-Yang2019-preprint} have found that
there exist no CR Yamabe contact forms on the Rossi sphere,
which is an example of non-embeddable CR manifolds.
Nonetheless,
the CR Yamabe problem is solvable affirmatively for embeddable three-dimensional CR manifolds.

\begin{theorem}
\label{thm:CR-Yamabe-problem}
	There exists a CR Yamabe contact form
	on any closed embeddable strictly pseudoconvex CR manifold of dimension three.
\end{theorem}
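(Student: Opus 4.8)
The plan is to solve the CR Yamabe problem by combining the classical variational machinery for the CR Yamabe functional with the two key facts that Theorem~\ref{thm:nonnegativity-of-CR-Paneitz-operator} now puts at our disposal: the nonnegativity of the CR Paneitz operator and the strict inequality $\mathcal{Y}(M, T^{1,0} M) \le \mathcal{Y}(S^3, T^{1,0} S^3)$ of Jerison--Lee, together with the CR positive mass theorem. Recall that a CR Yamabe contact form corresponds, after writing $\widetilde{\theta} = u^{2} \theta$ for a positive $u \in C^\infty(M)$ in the conformal class of a fixed background $\theta$ (the exponent is $p-2$ with critical Folland--Stein exponent $p = 2 + 2/n = 4$ in dimension three), to a minimizer of the normalized functional
\begin{equation}
	\mathcal{E}(u)
	= \frac{\int_M \left( b_n \lvert \nabla_b u \rvert^2 + \Scal\, u^2 \right) \theta \wedge d\theta}{\left( \int_M u^{p}\, \theta \wedge d\theta \right)^{2/p}},
\end{equation}
whose infimum over positive $u$ is exactly $\mathcal{Y}(M, T^{1,0} M)$.

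First I would recall the standard resolution of the subcritical problem: for each exponent $q < p$ the functional $\mathcal{E}_q$ obtained by replacing $p$ with $q$ satisfies the Palais--Smale condition because the Folland--Stein embedding $S^2_1 \hookrightarrow L^q$ is compact for subcritical $q$, so a positive minimizer $u_q$ exists and can be normalized. The crux, exactly as in Yamabe's original problem and its CR analogue treated by Jerison and Lee, is to pass to the limit $q \to p$ and show that the family $u_q$ does not concentrate, i.e.\ does not blow up at a point. The tool controlling this is precisely the strict inequality: if the Yamabe constant of $(M, T^{1,0} M)$ is \emph{strictly} less than that of the sphere, then the subcritical minimizers stay bounded in $S^2_1$ and converge to a genuine positive minimizer of $\mathcal{E}$, which after rescaling yields the desired CR Yamabe contact form. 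Thus the whole problem reduces to establishing the strict inequality whenever $(M, T^{1,0} M)$ is not CR equivalent to the sphere.

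The main obstacle, and the step where Theorem~\ref{thm:nonnegativity-of-CR-Paneitz-operator} enters decisively, is exactly this strict inequality. The sharp local analysis of Jerison--Lee shows that for $n \ge 2$ (higher dimensions) the strict inequality follows from a local curvature computation, but in dimension three ($n = 1$) that curvature obstruction vanishes and one is forced into a global argument: one must construct test functions modeled on the CR Green's function of the conformal sublaplacian and read off the sign of the constant term in its expansion, which is the CR mass. Here I would invoke \cite{Cheng-Malchiodi-Yang2017}*{Theorem 1.2}, the CR positive mass theorem, whose hypothesis is exactly the nonnegativity of the CR Paneitz operator together with positivity of the Yamabe constant. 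By Theorem~\ref{thm:nonnegativity-of-CR-Paneitz-operator} the Paneitz operator is nonnegative on every closed embeddable CR three-manifold, so the positive mass theorem applies and gives that the CR mass is positive unless $(M, T^{1,0} M)$ is CR equivalent to the sphere. Feeding this positive mass into the test-function expansion produces the strict inequality $\mathcal{Y}(M, T^{1,0} M) < \mathcal{Y}(S^3, T^{1,0} S^3)$ in the non-spherical case, completing the no-concentration argument.

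Finally I would dispose of the remaining case: if $(M, T^{1,0} M)$ \emph{is} CR equivalent to the standard sphere, then the Jerison--Lee extremals on $S^3$ furnish the CR Yamabe contact form explicitly, so the conclusion holds there as well. Assembling the two cases, every closed embeddable strictly pseudoconvex CR three-manifold admits a minimizer of $\mathcal{E}$, hence a CR Yamabe contact form, which is precisely the assertion of \cref{thm:CR-Yamabe-problem}.
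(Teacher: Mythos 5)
Your proposal is correct and follows essentially the same route as the paper: the paper's proof is exactly the two-case argument you describe, citing Jerison--Lee (their Theorems 3.4 and 7.2) for the subcritical/no-concentration machinery and the spherical case, and using \cref{thm:nonnegativity-of-CR-Paneitz-operator} together with the Cheng--Malchiodi--Yang positive mass theorem (packaged as \cref{cor:CR-Yamabe-constant}) to rule out the equality case $\mathcal{Y}(M, T^{1,0}M) = \mathcal{Y}(S^{3}, T^{1,0}S^{3})$ off the sphere. The only point worth making explicit, since the positive mass theorem assumes a positive CR Yamabe constant, is that when $\mathcal{Y}(M, T^{1,0}M) \leq 0$ the strict inequality holds trivially because $\mathcal{Y}(S^{3}, T^{1,0}S^{3}) > 0$, so that case needs no mass argument at all.
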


We will also apply \cref{thm:nonnegativity-of-CR-Paneitz-operator}
to the zero CR $Q$-curvature problem.
In a study of the Szeg\H{o} kernel,
Hirachi~\cite{Hirachi1993} has introduced a pseudo-Hermitian invariant
\begin{equation}
	Q_{\theta}
	= \frac{1}{6}(\Delta_{b} \Scal - 2 \Im \tensor{A}{_{1}_{1}_{,}^{1}^{1}}),
\end{equation}
which is now called the \emph{CR $Q$-curvature}.
Note that this name stems from the fact that this invariant coincides with
a CR analog of the $Q$-curvature;
see~\cite{Fefferman-Hirachi2003}.
Since $Q_\theta$ is expressed as a divergence,
its integral is identically zero.
Moreover,
the CR $Q$-curvature itself is identically zero for pseudo-Einstein contact forms;
that is, contact forms satisfying the equality
\begin{equation}
	\tensor{\Scal}{_{1}} - \sqrt{- 1} \tensor{A}{_{1}_{1}_{,}^{1}}
	= 0.
\end{equation}
The existence of such a contact form is equivalent to that of a Fefferman defining function if $M$ bounds a strictly pseudoconvex domain~\cites{Hirachi1993,Hislop-Perry-Tang2008};
in particular,
it always exists if the CR manifold is a real hypersurface in $\mathbb{C}^{2}$.
For these reasons,
it is natural to ask whether any CR manifold admits a contact form with zero CR $Q$-curvature.
Some authors have tackled this problem via the CR $Q$-curvature flow~%
\cites{Chang-Cheng-Chiu2007,Saotome-Chang2011,Chang-Kuo-Saotome2019-preprint}.
In this paper,
however,
we will take a more functional-analytic approach.

\begin{theorem}
\label{thm:zero-CR-Q-curvature}
	Let $(M, T^{1, 0} M)$ be a closed embeddable strictly pseudoconvex CR manifold of dimension three.
	There exists a contact form $\theta$ on $M$ with zero CR $Q$-curvature.
	Moreover,
	if $\hat{\theta}$ is also such a contact form,
	then $\hat{\theta} = e^{\Upsilon} \theta$ for a CR pluriharmonic function $\Upsilon$.
	In particular,
	in the case where $M$ admits a pseudo-Einstein contact form,
	$\theta$ is pseudo-Einstein if and only if $Q_{\theta} = 0$.
\end{theorem}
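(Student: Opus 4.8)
The plan is to recast the existence statement as a single linear equation for the conformal factor and to solve it with the spectral information about $P_{\theta}$ provided by \cref{thm:nonnegativity-of-CR-Paneitz-operator}. Recall that under a change of contact form $\hat{\theta} = e^{\Upsilon}\theta$ one has $\hat{\theta}\wedge d\hat{\theta} = e^{2\Upsilon}\theta\wedge d\theta$ and the transformation law
\[
	e^{2\Upsilon} Q_{\hat{\theta}} = Q_{\theta} + \tfrac{1}{6} P_{\theta}\Upsilon.
\]
Thus finding a contact form with zero CR $Q$-curvature is equivalent to solving $P_{\theta}\Upsilon = -6 Q_{\theta}$ on $M$. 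By \cref{thm:nonnegativity-of-CR-Paneitz-operator}, $P_{\theta}$ is formally self-adjoint, nonnegative, and has kernel equal to the space $\mathcal{P}$ of CR pluriharmonic functions; together with the closed range of $P_{\theta}$ (which I would deduce from embeddability and the closed-range property of the Kohn Laplacian), this yields the $L^2$-decomposition $L^2(M) = \mathcal{P}\oplus\operatorname{Range} P_{\theta}$ with $\operatorname{Range} P_{\theta} = \mathcal{P}^{\perp}$. Hence the equation has a solution $\Upsilon$, smooth by hypoellipticity of $P_{\theta}$, if and only if $Q_{\theta}\perp\mathcal{P}$ in $L^2(M, \theta\wedge d\theta)$.

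The heart of the matter is therefore the orthogonality $\int_{M} Q_{\theta}\, v\,\theta\wedge d\theta = 0$ for every $v\in\mathcal{P}$. I would first note that this pairing is a CR invariant: substituting $\hat{\theta} = e^{\Upsilon}\theta$ and using the transformation law together with $\hat{\theta}\wedge d\hat{\theta} = e^{2\Upsilon}\theta\wedge d\theta$ converts the integrand into $(Q_{\theta} + \tfrac{1}{6}P_{\theta}\Upsilon)\,v\,\theta\wedge d\theta$, and $\int_{M} (P_{\theta}\Upsilon) v = \int_{M}\Upsilon (P_{\theta} v) = 0$ because $v\in\ker P_{\theta}$; taking $v$ constant recovers the known vanishing of the total $Q$-curvature. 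To evaluate the invariant for general $v$, I would integrate by parts using the explicit formula $Q_{\theta} = \tfrac{1}{6}(\Delta_{b}\Scal - 2\Im A_{11,}{}^{11})$ and Lee's third-order characterization of CR pluriharmonic functions, showing that the scalar-curvature and torsion contributions cancel; alternatively, one can reduce to $v = \Re F$ with $F$ extending holomorphically to the filling $\Omega$ and push the computation into $\Omega$ via the Szeg\H{o} kernel, as in the proof of \cref{thm:nonnegativity-of-CR-Paneitz-operator}. This vanishing, as opposed to the mere invariance, is the main obstacle.

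Granting existence, the remaining clauses are formal consequences of the transformation law and \cref{thm:nonnegativity-of-CR-Paneitz-operator}. If $\hat{\theta} = e^{\Upsilon}\theta$ also has zero CR $Q$-curvature, then the transformation law forces $P_{\theta}\Upsilon = 0$, so $\Upsilon\in\mathcal{P}$ is CR pluriharmonic. Finally, suppose $M$ admits a pseudo-Einstein contact form $\theta_{0}$; then $Q_{\theta_{0}} = 0$, and by Lee's theorem the pseudo-Einstein contact forms are exactly those of the form $e^{\Upsilon}\theta_{0}$ with $\Upsilon$ CR pluriharmonic. Writing an arbitrary contact form as $\theta = e^{\Upsilon}\theta_{0}$ and applying the transformation law based at $\theta_{0}$ gives the chain $Q_{\theta} = 0 \iff P_{\theta_{0}}\Upsilon = 0 \iff \Upsilon\in\mathcal{P}$, the last condition being precisely that $\theta$ is pseudo-Einstein. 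This proves the stated equivalence.
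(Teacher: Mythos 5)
Your skeleton is the same as the paper's: reduce existence to solving $P_{\theta_{0}} \Upsilon = - Q_{\theta_{0}}$ (in this paper's normalization the transformation law is $e^{2 \Upsilon} Q_{\hat{\theta}} = Q_{\theta} + P_{\theta} \Upsilon$, with no factor $1/6$), invoke the self-adjointness, closed range, and partial regularity of $P_{\theta}$ (\cref{thm:functional-analysis-of-CR-Paneitz}, due to Hsiao) to get solvability if and only if $Q_{\theta_{0}} \perp \mathcal{P}$, and settle uniqueness and the pseudo-Einstein clause via the kernel statement of \cref{thm:nonnegativity-of-CR-Paneitz-operator}. Two small corrections there: the $L^{2}$-decomposition is $L^{2}(M) = \overline{\mathcal{P}} \oplus \Ran P_{\theta}$, since $\Ker P_{\theta}$ is the $L^{2}$-closure of $\mathcal{P}$; and $P_{\theta}$ is \emph{not} hypoelliptic (its $L^{2}$-kernel contains non-smooth functions), so smoothness of the solution must come from Hsiao's regularity statement for solutions in $(\Ker P_{\theta})^{\perp}$, not from hypoellipticity. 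But the genuine gap is exactly where you say ``the main obstacle'' lies: the orthogonality $\int_{M} v\, Q_{\theta}\, \theta \wedge d \theta = 0$ for all $v \in \mathcal{P}$ is asserted, not proved, and neither of your two sketched routes can close it.

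Here is why both fail. By \cref{prop:cohomological-interpretation-of-CR-Q-curvature}, the pairing equals $\tfrac{2 \pi}{3} \langle [d^{c}_{\CR} v] \cup c_{1}(T^{1, 0} M), [M] \rangle$, a cohomological quantity. A local integration-by-parts cancellation between the $\Delta_{b} \Scal$ and torsion contributions would therefore prove the vanishing on \emph{every} closed CR three-manifold, making no use of embeddability; but the pairing vanishes for formal reasons only when $c_{1}(T^{1, 0} M) = 0$ (\cref{prop:orthogonality-with-vanishing-Chern}) or when $[d^{c}_{\CR} v] = 0$, so no such local cancellation can be expected. Your second route fails because a CR pluriharmonic function is only \emph{locally} the real part of a CR holomorphic function: by \cref{lem:real-part-of-CR-holomorphic}, $v$ is globally such a real part precisely when $[d^{c}_{\CR} v] = 0$, i.e., exactly in the case already handled — the functions you cannot reduce to $\Re F$ are the only ones at issue (and the Szeg\H{o} kernel plays no role in this step of the paper). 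The paper's resolution is \cref{cor:pluriharmonic-extension}: every $v \in \mathcal{P}$ admits a smooth \emph{pluriharmonic} extension $\widetilde{v}$ to $\overline{\Omega}$ — this is where embeddability and the asymptotically complex hyperbolic Dirichlet problem (\cref{prop:Dirichlet-problem}, used in the proof of \cref{thm:nonnegativity-of-CR-Paneitz-operator}) enter — after which Stokes' theorem against the first Chern form $\Psi$ of any Hermitian metric on $T^{1, 0} \overline{\Omega}$ gives
\begin{equation}
	0
	= \int_{\Omega} d d^{c} \widetilde{v} \wedge \Psi
	= \int_{M} d^{c}_{\CR} v \wedge \Psi |_{M}
	= \langle [d^{c}_{\CR} v] \cup c_{1}(T^{1, 0} M), [M] \rangle.
\end{equation}
Without this global-extension input, or an equivalent substitute, the existence half of the theorem is not established.
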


The existence of such a contact form gives
a generalization of the total $Q$-prime curvature to embeddable CR manifolds with no pseudo-Einstein contact forms.
Let $(M, T^{1, 0} M)$ be a closed strictly pseudoconvex CR manifold of dimension three
and $\theta$ a contact form on $M$.
The \emph{$Q$-prime curvature} $Q^{\prime}_{\theta}$ is defined by
\begin{equation}
	Q^{\prime}_{\theta}
	= \frac{1}{2} \Scal^{2} - 2 \abs{\tensor{A}{_{1}_{1}}}^{2} + \Delta_{b} \Scal.
\end{equation}
Case and Yang~\cite{Case-Yang2013} have proved that
the integral of the $Q$-prime curvature
\begin{equation}
	\overline{Q}^{\prime}(M, T^{1, 0} M)
	= \int_{M} Q^{\prime}_{\theta} \theta \wedge d \theta
\end{equation}
is independent of the choice of a contact form $\theta$ with zero CR $Q$-curvature,
and defines a CR invariant of $(M, T^{1, 0} M)$,
called the \emph{total $Q$-prime curvature}.
Hence,
if $(M, T^{1, 0} M)$ is embeddable,
then the total $Q$-prime curvature is always well-defined and gives a CR invariant.
The total $Q$-prime curvature and the CR Yamabe constant satisfy the following inequality,
which is a CR analog of~\cite{Gursky1999}*{Equation (1.4)}.

\begin{theorem}
\label{thm:Einstein-condition}
	Let $(M, T^{1, 0} M)$ be a closed embeddable three-dimensional strictly pseudoconvex CR manifold.
	Then
	\begin{equation}
	\label{eq:comparison-with-Q-prime-and-Yamabe}
		\overline{Q}^{\prime}(M, T^{1, 0} M)
		\leq \frac{1}{2} \mathcal{Y}(M, T^{1, 0} M)^{2}
	\end{equation}
	with equality if and only if $M$ admits a pseudo-Einstein contact form
	with vanishing Tanaka-Webster torsion.
	In particular if $c_{1}(T^{1, 0} M) \neq 0$ in $H^{2}(M, \mathbb{R})$,
	then the strict inequality holds.
\end{theorem}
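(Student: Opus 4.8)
The plan is to combine the nonnegativity of the CR Paneitz operator (\cref{thm:nonnegativity-of-CR-Paneitz-operator}) with the existence of a CR Yamabe contact form (\cref{thm:CR-Yamabe-problem}) and of a contact form with zero CR $Q$-curvature (\cref{thm:zero-CR-Q-curvature}). First I would fix a contact form $\theta_{0}$ with $Q_{\theta_{0}} = 0$, so that by definition $\overline{Q}^{\prime}(M, T^{1, 0} M) = \int_{M} Q^{\prime}_{\theta_{0}} \theta_{0} \wedge d \theta_{0}$. The transformation law for the $Q$-prime curvature established by Case and Yang~\cite{Case-Yang2013}, together with $Q_{\theta_{0}} = 0$ and the fact that $P_{\theta_{0}}$ is a divergence (so that $\int_{M} (P_{\theta_{0}} \phi) \, \theta_{0} \wedge d \theta_{0} = 0$), should yield that for an arbitrary contact form $\hat{\theta} = e^{\Upsilon} \theta_{0}$,
\begin{equation}
	\int_{M} Q^{\prime}_{\hat{\theta}} \, \hat{\theta} \wedge d \hat{\theta}
	= \overline{Q}^{\prime}(M, T^{1, 0} M)
	+ \frac{1}{2} \int_{M} \Upsilon (P_{\theta_{0}} \Upsilon) \, \theta_{0} \wedge d \theta_{0}.
\end{equation}
Since $P_{\theta_{0}}$ is nonnegative by \cref{thm:nonnegativity-of-CR-Paneitz-operator}, the last term is nonnegative; hence $\int_{M} Q^{\prime}_{\hat{\theta}} \hat{\theta} \wedge d \hat{\theta} \geq \overline{Q}^{\prime}(M, T^{1, 0} M)$ for \emph{every} contact form $\hat{\theta}$, with equality exactly when $\Upsilon \in \ker P_{\theta_{0}}$, that is, again by \cref{thm:nonnegativity-of-CR-Paneitz-operator}, when $\hat{\theta}$ itself has zero CR $Q$-curvature.

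Next I would take $\hat{\theta}$ to be a CR Yamabe contact form $\theta_{Y}$ provided by \cref{thm:CR-Yamabe-problem}, so that $\Scal = \mathcal{Y}(M, T^{1, 0} M)$ is constant and $\int_{M} \theta_{Y} \wedge d \theta_{Y} = 1$. Because $\Delta_{b}$ annihilates constants,
\begin{equation}
	\int_{M} Q^{\prime}_{\theta_{Y}} \theta_{Y} \wedge d \theta_{Y}
	= \frac{1}{2} \mathcal{Y}(M, T^{1, 0} M)^{2} - 2 \int_{M} \abs{\tensor{A}{_{1}_{1}}}^{2} \theta_{Y} \wedge d \theta_{Y}
	\leq \frac{1}{2} \mathcal{Y}(M, T^{1, 0} M)^{2}.
\end{equation}
Combining the two displays gives \cref{eq:comparison-with-Q-prime-and-Yamabe}.

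For the equality case, equality in \cref{eq:comparison-with-Q-prime-and-Yamabe} forces both of the above inequalities to be equalities: the first forces $\theta_{Y}$ to have zero CR $Q$-curvature, and the second forces the Tanaka-Webster torsion $\tensor{A}{_{1}_{1}}$ of $\theta_{Y}$ to vanish. Since $\theta_{Y}$ then has constant scalar curvature and vanishing torsion, the pseudo-Einstein equation $\tensor{\Scal}{_{1}} - \sqrt{-1} \tensor{A}{_{1}_{1}_{,}^{1}} = 0$ holds, so $\theta_{Y}$ is a pseudo-Einstein contact form with vanishing torsion. Conversely, suppose $M$ admits a pseudo-Einstein contact form $\theta_{\ast}$ with vanishing torsion. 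Then $\tensor{\Scal}{_{1}} = 0$, so $\Scal$ is a constant $s$, and $Q_{\theta_{\ast}} = 0$; normalizing $\int_{M} \theta_{\ast} \wedge d \theta_{\ast} = 1$ gives $\overline{Q}^{\prime}(M, T^{1, 0} M) = \tfrac{1}{2} s^{2}$. I would then check that $\theta_{\ast}$ realizes the CR Yamabe constant, i.e.\ $s = \mathcal{Y}(M, T^{1, 0} M)$: the sign of $s$ agrees with that of $\mathcal{Y}(M, T^{1, 0} M)$ since the positive constant is the ground state of the CR conformal sublaplacian, and the remaining comparison follows from $\mathcal{Y}(M, T^{1, 0} M) \leq s$ together with \cref{eq:comparison-with-Q-prime-and-Yamabe} (and, in the nonpositive case, the uniqueness of constant-scalar-curvature contact forms). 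Hence equality holds.

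Finally, the last assertion follows because the existence of a pseudo-Einstein contact form forces $c_{1}(T^{1, 0} M) = 0$ in $H^{2}(M, \mathbb{R})$; therefore $c_{1}(T^{1, 0} M) \neq 0$ rules out the equality case and the inequality must be strict. I expect the main obstacle to be the identity in the first paragraph with the correct positive coefficient on the Paneitz quadratic form --- this is precisely where \cref{thm:nonnegativity-of-CR-Paneitz-operator} is indispensable --- together with verifying, in the equality analysis, that a pseudo-Einstein contact form with vanishing torsion is necessarily a CR Yamabe contact form.
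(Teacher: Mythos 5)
Your proof of the inequality and of the forward equality direction follows the paper's own argument essentially verbatim: fix a contact form with zero CR $Q$-curvature via \cref{thm:zero-CR-Q-curvature}, pass conformally to a CR Yamabe contact form via \cref{thm:CR-Yamabe-problem}, and squeeze $\overline{Q}^{\prime}(M, T^{1, 0} M)$ from above by $\tfrac{1}{2} \mathcal{Y}(M, T^{1, 0} M)^{2}$ and from below by the Case--Yang transformation law together with the nonnegativity of $P_{\theta}$ from \cref{thm:nonnegativity-of-CR-Paneitz-operator}; equality then forces vanishing torsion, and constant scalar curvature plus vanishing torsion gives the pseudo-Einstein equation, exactly as in the paper. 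One correction: with $Q_{\theta_{0}} = 0$, the coefficient of the Paneitz quadratic form in \cref{eq:transformation-law-of-total-Q-prime} is $6$, not $\tfrac{1}{2}$ --- as you anticipated, only its positivity matters, so nothing breaks.

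Where you genuinely diverge is the converse equality direction. The paper disposes of it in one stroke by citing \cite{Wang2015}*{Theorem 4} (a Jerison--Lee-type identity result) together with \cref{thm:CR-Yamabe-problem} to conclude that a pseudo-Einstein contact form with vanishing torsion is itself a CR Yamabe contact form. You instead run a trichotomy on the sign of the constant scalar curvature $s$: the sign agreement with $\mathcal{Y}(M, T^{1, 0} M)$ via the ground-state argument for the conformal sublaplacian (the positive constant is the first eigenfunction, so $\lambda_{1} = s$) is correct; in the positive case your observation that the already-proven inequality closes the loop by itself is a nice shortcut --- $\mathcal{Y} \leq s$ from the variational definition and $\tfrac{1}{2} s^{2} = \overline{Q}^{\prime} \leq \tfrac{1}{2} \mathcal{Y}^{2}$ force $s = \mathcal{Y}$ without any Jerison--Lee machinery, and it is precisely the positive case where such uniqueness theorems are hard; in the nonpositive case you appeal to uniqueness of constant-scalar-curvature contact forms in a conformal class, which is a standard sub-Laplacian maximum-principle fact (evaluate the CR Yamabe equation at the extrema of the conformal factor) but is the one external input you should either prove or cite explicitly. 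In short: the paper's route buys a uniform treatment of all signs via Wang's theorem; yours buys elementarity in the hard positive case at the price of a separate, classical argument when $\mathcal{Y} \leq 0$. Your handling of the final assertion (pseudo-Einstein forces $c_{1}(T^{1, 0} M) = 0$, so $c_{1} \neq 0$ rules out equality) matches the paper. The proposal is correct.
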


Note that there exists a two-dimensional strictly pseudoconvex domain
whose boundary $M$ satisfies $c_{1}(T^{1, 0} M) \neq 0$ in $H^{2}(M, \mathbb{R})$~\cite{Etnyre-Ozbagci2008}*{Theorem 6.2}.
In particular,
the strict inequality in \cref{eq:comparison-with-Q-prime-and-Yamabe} holds for this $M$.

We also mention the case where the equality in \cref{eq:comparison-with-Q-prime-and-Yamabe} holds.
A CR manifold having a pseudo-Einstein contact form with vanishing Tanaka-Webster torsion
is known as a Sasakian $\eta$-Einstein manifold.
Such a manifold plays a crucial role in CR geometry;
see~\cites{Wang2015,Case-Gover2017-preprint,Takeuchi2018} for example.

\cref{thm:Einstein-condition} gives a generalization of~\cite{Case-Yang2013}*{Theorem 1.1}
for embeddable CR manifolds.

\begin{corollary}
\label{cor:characterization-of-sphere}
	Let $(M, T^{1, 0} M)$ be a closed embeddable strictly pseudoconvex CR manifold of dimension three
	with nonnegative CR Yamabe constant.
	Then
	\begin{equation}
		\overline{Q}^{\prime}(M, T^{1, 0} M)
		\leq \overline{Q}^{\prime}(S^{3}, T^{1, 0} S^{3})
	\end{equation}
	with equality if and only if $(M, T^{1, 0} M)$ is CR equivalent to $(S^{3}, T^{1, 0} S^{3})$.
\end{corollary}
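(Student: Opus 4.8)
The plan is to prove \cref{cor:characterization-of-sphere} by combining the sharp inequality of \cref{thm:Einstein-condition} with the rigidity statement of \cref{cor:CR-Yamabe-constant}. The key observation is that both theorems already do the heavy lifting, so the corollary reduces to chaining two inequalities and tracking when equality propagates through the chain. First I would apply \cref{thm:Einstein-condition} to $(M, T^{1,0}M)$ to obtain
\begin{equation}
	\overline{Q}^{\prime}(M, T^{1, 0} M)
	\leq \frac{1}{2} \mathcal{Y}(M, T^{1, 0} M)^{2}.
\end{equation}
Next, since the CR Yamabe constant is assumed nonnegative and satisfies the Jerison--Lee bound $\mathcal{Y}(M, T^{1, 0} M) \leq \mathcal{Y}(S^{3}, T^{1, 0} S^{3})$ quoted in the introduction, squaring (which preserves the inequality for nonnegative quantities) yields $\tfrac{1}{2}\mathcal{Y}(M, T^{1,0}M)^2 \leq \tfrac{1}{2}\mathcal{Y}(S^3, T^{1,0}S^3)^2$. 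I would then compute $\overline{Q}^{\prime}(S^{3}, T^{1, 0} S^{3})$ and verify that it equals $\tfrac{1}{2}\mathcal{Y}(S^3, T^{1,0}S^3)^2$; this identity holds because the standard sphere carries a pseudo-Einstein contact form with vanishing torsion, so it is precisely the equality case of \cref{thm:Einstein-condition} applied to $S^3$. Combining the two steps gives $\overline{Q}^{\prime}(M, T^{1, 0} M) \leq \overline{Q}^{\prime}(S^{3}, T^{1, 0} S^{3})$.

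For the rigidity, I would analyze when equality holds throughout. Equality in the final inequality forces equality in both intermediate steps. Equality in $\mathcal{Y}(M)^2 \leq \mathcal{Y}(S^3)^2$ together with nonnegativity forces $\mathcal{Y}(M, T^{1,0}M) = \mathcal{Y}(S^3, T^{1,0}S^3)$, and by \cref{cor:CR-Yamabe-constant} this already implies that $(M, T^{1,0}M)$ is CR equivalent to $(S^3, T^{1,0}S^3)$. Conversely, if $(M, T^{1,0}M)$ is CR equivalent to the sphere, then both $\overline{Q}^{\prime}$ and $\mathcal{Y}$ are CR invariants, so equality is immediate. The main subtlety to keep in mind is the role of the nonnegativity hypothesis: without it, $\mathcal{Y}(M,T^{1,0}M)^2$ could be large even when $\mathcal{Y}(M,T^{1,0}M)$ is very negative, and squaring would reverse the ordering, so the nonnegativity is exactly what makes the monotonicity of squaring usable.

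I do not expect a genuine obstacle here, since the corollary is formally a consequence of results already established; the only point requiring care is confirming the normalization $\overline{Q}^{\prime}(S^{3}, T^{1, 0} S^{3}) = \tfrac{1}{2}\mathcal{Y}(S^3, T^{1,0}S^3)^2$ rather than appealing to it as self-evident. This can be done either by the equality-case identification in \cref{thm:Einstein-condition} (the standard sphere being Sasakian $\eta$-Einstein) or by a direct computation on $S^3$ using the known values of its total $Q$-prime curvature and Yamabe constant. Once that identity is in hand, the proof is a two-line deduction plus the equality-case bookkeeping.
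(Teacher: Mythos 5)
Your proposal is correct and follows essentially the same route as the paper: the identical chain $\overline{Q}^{\prime}(M, T^{1,0}M) \leq \tfrac{1}{2}\mathcal{Y}(M, T^{1,0}M)^{2} \leq \tfrac{1}{2}\mathcal{Y}(S^{3}, T^{1,0}S^{3})^{2} = \overline{Q}^{\prime}(S^{3}, T^{1,0}S^{3})$, with the last equality obtained from the equality case of \cref{thm:Einstein-condition} because the standard contact form on $S^{3}$ is pseudo-Einstein with vanishing Tanaka--Webster torsion, and rigidity obtained by forcing $\mathcal{Y}(M, T^{1,0}M) = \mathcal{Y}(S^{3}, T^{1,0}S^{3})$ and invoking \cref{cor:CR-Yamabe-constant}. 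Your explicit remarks on why nonnegativity is needed to square the Jerison--Lee bound and on the converse via CR invariance are exactly the points implicit in the paper's proof, just spelled out.
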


We will also apply our results to the logarithmic singularity of the Szeg\H{o} kernel.
Let $\Omega$ be a strictly pseudoconvex domain
in a two-dimensional complex manifold
and $\theta$ a contact form on $\partial \Omega$.
Define the \emph{Hardy space} $\mathcal{H}_{\theta}(\Omega)$
as the set consisting of the boundary values of holomorphic functions on $\Omega$
that are $L^{2}$ with respect to the volume form $\theta \wedge d \theta$ on $\partial \Omega$.
The \emph{Szeg\H{o} kernel} $S_{\theta}(z, \overline{w})$
is the reproducing kernel of $\mathcal{H}_{\theta}(\Omega)$.
Fix a defining function $\rho$ of $\Omega$.
The boundary behavior of $S_{\theta}$ on the diagonal is given by
\begin{equation}
\label{eq:aymptotic-expansion-of-Szego-kernel}
	S_{\theta}(z, \overline{z})
	= \phi_{\theta} \rho^{- 2} + \psi_{\theta} \log (- \rho),
\end{equation}
where $\phi_{\theta}$ and $\psi_{\theta}$ are smooth functions on $\overline{\Omega}$.
Moreover,
the Tayler coefficients of $\psi_{\theta}$ at a given boundary point
are uniquely determined by the behavior of $\theta$ near the point~\cite{Boutet_de_Monvel-Sjostrand1976}.
Hirachi~\cite{Hirachi1993}*{Main Theorem} has proved that
\begin{equation}
\label{eq:boundary-value-of-logarithmic-singularity}
	\psi_{\theta} |_{\partial \Omega}
	= \frac{1}{4 \pi^{2}} Q_{\theta}.
\end{equation}
Hence $\psi_{\theta} = O(\rho)$ if and only if $\theta$ is of zero CR $Q$-curvature.
It is natural to ask when $\psi_{\theta}$ vanishes to higher-order on the boundary.

\begin{theorem}
\label{thm:vanishing-of-logarithmic-singularity-of-Szego-kernel}
	Let $\Omega$ be a two-dimensional strictly pseudoconvex domain
	whose boundary admits a pseudo-Einstein contact form.
	For a contact form $\theta$ on $\partial \Omega$,
	the function $\psi_{\theta}$ is of $O(\rho^{2})$ (resp.\ $O(\rho^{3})$)
	if and only if $\theta$ is pseudo-Einstein
	and $\partial \Omega$ is obstruction flat (resp.\ spherical).
\end{theorem}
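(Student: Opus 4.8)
The plan is to extract the characterization from the Taylor expansion of $\psi_{\theta}$ along $\rho$, coefficient by coefficient, after first disposing of the pseudo-Einstein condition, which is common to both cases. Being $O(\rho^{2})$ or $O(\rho^{3})$ certainly forces $\psi_{\theta}|_{\partial \Omega} = 0$, so Hirachi's boundary value formula \cref{eq:boundary-value-of-logarithmic-singularity} gives $Q_{\theta} = 0$. Since $\partial \Omega$ admits a pseudo-Einstein contact form by hypothesis, \cref{thm:zero-CR-Q-curvature} tells us that a contact form has zero CR $Q$-curvature if and only if it is pseudo-Einstein; hence $\theta$ is pseudo-Einstein, and conversely a pseudo-Einstein $\theta$ already has $\psi_{\theta} = O(\rho)$. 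From now on I may assume $\theta$ is pseudo-Einstein and study the higher coefficients.

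Next I would use that the Taylor coefficients of $\psi_{\theta}$ are local: by \cite{Boutet_de_Monvel-Sjostrand1976} the coefficient of $\rho^{k}$ is a universal local expression in the pseudo-Hermitian data, hence a pseudo-Hermitian invariant, and tracking the behaviour of $\psi_{\theta}$ and of $\rho$ under $\theta \mapsto e^{\Upsilon} \theta$ shows that this coefficient is, on the pseudo-Einstein locus, a CR density of weight $-(2 + k)$. The coefficient of $\rho^{0}$ is $(4 \pi^{2})^{-1} Q_{\theta}$. I would then identify the coefficient of $\rho^{1}$, a weight $-3$ CR invariant, with a nonzero multiple of the CR obstruction function $\mathcal{O}$ (the obstruction to a smooth solution of Fefferman's complex Monge--Amp\`{e}re equation), whose vanishing is by definition obstruction flatness: weight $-3$ is precisely the weight of $\mathcal{O}$, so up to a constant the two must agree, and a model computation on a non-obstruction-flat domain pins the constant down to be nonzero. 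This yields that $\psi_{\theta} = O(\rho^{2})$ if and only if $\theta$ is pseudo-Einstein and $\partial \Omega$ is obstruction flat.

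For the $O(\rho^{3})$ statement I would assume in addition that $\partial \Omega$ is obstruction flat, so that the coefficients of $\rho^{0}$ and $\rho^{1}$ already vanish, and examine the coefficient of $\rho^{2}$, a weight $-4$ CR invariant. Under the obstruction-flat constraint I expect it to be a nonzero multiple of the squared norm $\abs{Q_{11}}^{2}$ of the Cartan tensor, which is the sole local obstruction to sphericity in dimension three. A spherical CR manifold is obstruction flat, being locally CR equivalent to the sphere, whose Szeg\H{o} kernel carries no logarithmic singularity, so the lower coefficients automatically vanish; and it has vanishing Cartan tensor, so the weight $-4$ coefficient vanishes as well. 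Conversely the vanishing of this coefficient forces $Q_{11} = 0$, i.e.\ sphericity. Combining the three identifications gives the $O(\rho^{3})$ characterization.

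The main obstacle is exactly the identification of the coefficients of $\rho^{1}$ and $\rho^{2}$ with $\mathcal{O}$ and with $\abs{Q_{11}}^{2}$, together with the nonvanishing of the two constants. This amounts to controlling the local invariant theory of the Szeg\H{o} kernel---showing that, once the pseudo-Einstein and obstruction-flat constraints are imposed, the relevant spaces of weight $-3$ and weight $-4$ CR invariants are spanned by $\mathcal{O}$ and $\abs{Q_{11}}^{2}$ respectively---and then normalizing on suitable model domains such as small perturbations of the ball. This can be carried out either through a direct analysis of the Boutet de Monvel--Sj\"{o}strand parametrix or through Hirachi's calculus of pseudo-Hermitian invariants for the Szeg\H{o} kernel; the reduction to the pseudo-Einstein case and the sphere computation are routine by comparison.
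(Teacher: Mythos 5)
Your opening reduction coincides with the paper's: $\psi_{\theta}=O(\rho^{2})$ forces $\psi_{\theta}|_{\partial\Omega}=0$, hence $Q_{\theta}=0$ by \cref{eq:boundary-value-of-logarithmic-singularity}, and since $\partial\Omega$ admits a pseudo-Einstein contact form, \cref{thm:zero-CR-Q-curvature} upgrades $Q_{\theta}=0$ to ``$\theta$ is pseudo-Einstein'' --- this is exactly where the paper's new results enter, and you use them correctly. Everything after that, however, contains a genuine gap which you half-acknowledge yourself. The step ``the $\rho$-coefficient is a weight $-3$ invariant and $\mathcal{O}$ has weight $-3$, so up to a constant the two must agree'' is a non sequitur: equality of weights does not force proportionality. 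You would first have to prove that the space of admissible local invariants of that weight (on the pseudo-Einstein locus, modulo pluriharmonic rescalings) is one-dimensional and spanned by $\mathcal{O}$; a priori there are other weight $-3$ pseudo-Hermitian scalars, such as $\Scal^{3}$, $\Scal\,\Delta_{b}\Scal$, or $\Scal\,\abs{\tensor{A}{_{1}_{1}}}^{2}$, and ruling them all out is a classification theorem, not a count. The same objection applies to your weight $-4$ claim for $\abs{\tensor{Q}{_{1}_{1}}}^{2}$. Moreover, your preliminary assertion that each Taylor coefficient of $\psi_{\theta}$ is a well-defined CR density is unjustified as stated: the expansion of $\psi_{\theta}$ depends on the choice of $\rho$, so the coefficients only become invariants of $\theta$ after normalizing the defining function --- the paper fixes a Fefferman defining function, $\mathcal{J}_{z}[\rho]=1+O(\rho^{3})$, with $\theta=d^{c}\rho|_{\partial\Omega}$ --- and \cite{Boutet_de_Monvel-Sjostrand1976} gives locality of the coefficients, not their universal polynomial expression in pseudo-Hermitian data, which requires the Fefferman--Hirachi invariant-theory machinery.

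What you defer as ``the main obstacle\dots can be carried out either through a direct analysis of the Boutet de Monvel--Sj\"{o}strand parametrix or through Hirachi's calculus'' is in fact the entire mathematical content of the higher-order statements, and it is precisely what the paper cites rather than reproves: with a Fefferman defining function and coordinates in which $\mathcal{J}_{z}[\rho]=1+O(\rho^{3})$, Hirachi, Komatsu, and Nakazawa computed $\psi_{\theta}$ through second order, and \cite{Graham1987}*{Proposition 1.8} together with \cite{Hirachi-Komatsu-Nakawaza1993}*{Proposition 1 and Theorem 1} gives $\psi_{\theta}=O(\rho^{2})$ if and only if $\partial\Omega$ is obstruction flat, while \cite{Hirachi-Komatsu-Nakawaza1993}*{Remark 2} gives $\psi_{\theta}=O(\rho^{3})$ if and only if $\partial\Omega$ is spherical. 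Note also that your proposed ``model computation on a non-obstruction-flat domain to pin down the constant'' is not routine: determining $\psi_{\theta}$ to second order on any such domain is of essentially the same difficulty as the general computation you are trying to avoid. So your proposal correctly reproduces the paper's reduction to the pseudo-Einstein case but replaces the hard half of the proof by an unexecuted program; as written it is incomplete exactly where the theorem is nontrivial.
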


Note that the above theorem has been obtained by Hirachi
in the case when $\Omega$ is a domain in $\mathbb{C}^{2}$
and $\partial \Omega$ has transverse symmetry~\cite{Hirachi1993}.
It is interesting to ask what happens when $\partial \Omega$ has no pseudo-Einstein contact forms.
We also mention a recent progress by Curry and Ebenfelt~\cites{Curry-Ebenfelt2018-preprint,Curry-Ebenfelt2019} on the question:
If $\psi_{\theta} = O(\rho^{2})$,
does $\psi_{\theta} = O(\rho^{3})$ hold?

This paper is organized as follows.
In \cref{section:preliminaries},
we recall some basic definitions and facts on CR manifolds.
In \cref{section:CR-pluriharmonic-functions-and-pseudo-Einstein-condition},
we discuss a CR analog of $d^{c}$,
which is closely related to CR pluriharmonic functions
and plays an important role in the proofs of our results in this paper.
\cref{section:CR-Paneitz-operator} is devoted to
proofs of \cref{thm:nonnegativity-of-CR-Paneitz-operator,thm:CR-Yamabe-problem}.
In \cref{section:CR-Q-curvature},
we tackle the existence problem of a contact form with zero CR $Q$-curvature.
\cref{section:logarithmic-singularity-of-the-Szego-kernel} deals with
the logarithmic singularity of the Szeg\H{o} kernel.

\section{Preliminaries} \label{section:preliminaries}

Let $M$ be a smooth three-dimensional manifold without boundary.
A \emph{CR structure} is a complex one-dimensional subbundle $T^{1,0} M$
of the complexified tangent bundle $TM \otimes \mathbb{C}$ such that
\begin{equation}
	T^{1, 0}M \cap T^{0, 1}M = 0,
\end{equation}
where $T^{0, 1} M$ is the complex conjugate of $T^{1, 0} M$ in $T M \otimes \mathbb{C}$.
An important example of a three-dimensional CR manifold
is a real hypersurface $M$ in a two-dimensional complex manifold $X$;
this $M$ has the canonical CR structure
\begin{equation}
	T^{1, 0} M
	= T^{1, 0} X |_{M} \cap (T M \otimes \mathbb{C}).
\end{equation}
Introduce an operator $\overline{\partial}_{b} \colon C^{\infty}(M) \to \Gamma((T^{0, 1} M)^{*})$ by
\begin{equation}
	\overline{\partial}_{b} f = (d f)|_{T^{0, 1} M}.
\end{equation}
A smooth function $f$ is called a \emph{CR holomorphic function}
if $\overline{\partial}_{b} f = 0$.
A \emph{CR pluriharmonic function} is a real-valued smooth function
that is locally the real part of a CR holomorphic function.
We denote by $\mathcal{P}$ the space of CR pluriharmonic functions.

A CR structure $T^{1, 0} M$ is said to be \emph{strictly pseudoconvex}
if there exists a nowhere-vanishing real one-form $\theta$ on $M$
such that
$\theta$ annihilates $T^{1, 0} M$ and
\begin{equation}
	- \sqrt{- 1} d \theta (Z, \overline{Z}) > 0, \qquad
	0 \neq Z \in T^{1, 0} M;
\end{equation}
we call such a one-form a \emph{contact form}.
Denote by $T$ the \emph{Reeb vector field} with respect to $\theta$; 
that is, the unique vector field satisfying
\begin{equation}
	\theta(T) = 1, \qquad \iota_{T} d\theta = 0.
\end{equation}
Let $\tensor{Z}{_{1}}$ be a local frame of $T^{1, 0} M$,
and set $\tensor{Z}{_{\overline{1}}} = \overline{\tensor{Z}{_{1}}}$.
Then
$(T, \tensor{Z}{_{1}}, \tensor{Z}{_{\overline{1}}})$ gives a local frame of $T M \otimes \mathbb{C}$,
called an \emph{admissible frame}.
Its dual frame $(\theta, \tensor{\theta}{^{1}}, \tensor{\theta}{^{\overline{1}}})$
is called an \emph{admissible coframe}.
The two-form $d \theta$ is written as
\begin{equation}
	d \theta = \sqrt{- 1} \tensor{l}{_{1}_{\overline{1}}} \tensor{\theta}{^{1}} \wedge \tensor{\theta}{^{\overline{1}}},
\end{equation}
where $\tensor{l}{_{1}_{\overline{1}}}$ is a positive function.
We use $\tensor{l}{_{1}_{\overline{1}}}$ and its multiplicative inverse $\tensor{l}{^{1}^{\overline{1}}}$
to raise and lower indices.

A contact form $\theta$ induces a canonical connection $\nabla$,
called the \emph{Tanaka-Webster connection} with respect to $\theta$.
It is defined by
\begin{equation}
	\nabla T
	= 0,
	\quad
	\nabla \tensor{Z}{_{1}}
	= \tensor{\omega}{_{1}^{1}} \tensor{Z}{_{1}},
	\quad
	\nabla \tensor{Z}{_{\overline{1}}}
	= \tensor{\omega}{_{\overline{1}}^{\overline{1}}} \tensor{Z}{_{\overline{1}}}
	\quad
	\pqty{ \tensor{\omega}{_{\overline{1}}^{\overline{1}}}
	= \overline{\tensor{\omega}{_{1}^{1}}} }
\end{equation}
with the following structure equations:
\begin{gather}
	d \tensor{\theta}{^{1}}
	= \tensor{\theta}{^{1}} \wedge \tensor{\omega}{_{1}^{1}}
	+ \tensor{A}{^{1}_{\overline{1}}} \theta \wedge \tensor{\theta}{^{\overline{1}}}, \\
	d \tensor{l}{_{1}_{\overline{1}}}
	= \tensor{\omega}{_{1}^{1}} \tensor{l}{_{1}_{\overline{1}}}
	+ \tensor{l}{_{1}_{\overline{1}}} \tensor{\omega}{_{\overline{1}}^{\overline{1}}}.
\end{gather}
The tensor $\tensor{A}{_{1}_{1}} = \overline{\tensor{A}{_{\overline{1}}_{\overline{1}}}}$
is called the \emph{Tanaka-Webster torsion}.
We denote the components of a successive covariant derivative of a tensor
by subscripts preceded by a comma,
for example, $\tensor{K}{_{1}_{\overline{1}}_{,}_{1}}$;
we omit the comma if the derivatives are applied to a function.
We use the index $0$ for the component $T$ or $\theta$ in our index notation. 
The commutators of the second derivatives for $u \in C^{\infty}(M)$ are given by
\begin{equation}
\label{eq:commutator-of-covariant-derivatives}
	\tensor{u}{_{1}_{\overline{1}}} - \tensor{u}{_{\overline{1}}_{1}}
	= \sqrt{- 1} \tensor{l}{_{1}_{\overline{1}}} \tensor{u}{_{0}},
	\qquad
	\tensor{u}{_{0}_{1}} - \tensor{u}{_{1}_{0}}
	= \tensor{A}{_{1}_{1}} \tensor{u}{^{1}};
\end{equation}
see~\cite{Lee1988}*{(2.14)}.
Define the \emph{sub-Laplacian} $\Delta_{b}$ by
\begin{equation}
\label{eq:sub-Laplacian}
	\Delta_{b} u
	= - \tensor{u}{_{\overline{1}}^{\overline{1}}}
	-  \tensor{u}{_{1}^{1}}
\end{equation}
for $u \in C^{\infty}(M)$.
From \cref{eq:commutator-of-covariant-derivatives},
it follows that
\begin{equation}
\label{eq:another-form-of-sub-Laplacian}
	\Delta_{b} u
	= - 2 \tensor{u}{_{\overline{1}}^{\overline{1}}} - \sqrt{- 1} \tensor{u}{_{0}}
	= - 2 \tensor{u}{_{1}^{1}} + \sqrt{- 1} \tensor{u}{_{0}}.
\end{equation}
The curvature form
$\tensor{\Omega}{_{1}^{1}} = d \tensor{\omega}{_{1}^{1}}$
of the Tanaka-Webster connection satisfies
\begin{equation} \label{eq:curvature-form-of-TW-connection}
	\tensor{\Omega}{_{1}^{1}}
	= \Scal \cdot \tensor{l}{_{1}_{\overline{1}}} \tensor{\theta}{^{1}} \wedge \tensor{\theta}{^{\overline{1}}}
		- \tensor{A}{_{1}_{1}_{,}^{1}} \theta \wedge \tensor{\theta}{^{1}}
		+ \tensor{A}{_{\overline{1}}_{\overline{1}}_{,}^{\overline{1}}} \theta \wedge \tensor{\theta}{^{\overline{1}}},
\end{equation}
where $\Scal$ is the \emph{Tanaka-Webster scalar curvature}.

\begin{comment}

Bianchi identities for the Tanaka-Webster connection give that
\begin{equation}
	\tensor{\Scal}{_{0}}
	= \tensor{A}{_{1}_{1}_{,}^{1}^{1}} + \tensor{A}{_{\overline{1}}_{\overline{1}}_{,}^{\overline{1}}^{\overline{1}}};
\end{equation}
see~\cite{Lee1988}*{Lemma 2.2}.
The \emph{Cartan tensor} $\tensor{Q}{_{1}_{1}}$ is defined by
\begin{equation}
	\tensor{Q}{_{1}_{1}}
	= - \frac{1}{6} \tensor{\Scal}{_{1}_{1}} - \frac{\sqrt{- 1}}{2} \Scal \tensor{A}{_{1}_{1}}
		+ \tensor{A}{_{1}_{1}_{,}_{0}} + \frac{2 \sqrt{- 1}}{3} \tensor{A}{_{1}_{1}_{,}^{1}_{1}};
\end{equation}
see~\cite{Cheng-Lee1990}*{Lemma 2.2}.
For a given point $p$ on $M$,
$M$ is spherical at $p$ if and only if $\tensor{Q}{_{1}_{1}} = 0$ near $p$.
The \emph{CR obstruction function} $\mathcal{O}$ is given by
\begin{equation}
	\mathcal{O}
	= \frac{1}{3} \tensor{Q}{_{1}_{1}_{,}^{1}^{1}}
		- \frac{\sqrt{- 1}}{3} \tensor{A}{^{1}^{1}} \tensor{Q}{_{1}_{1}}.
\end{equation}
A CR manifold is said to be \emph{obstruction flat}
if $\mathcal{O}$ vanishes identically.
Note that this condition is independent of the choice of $\theta$;
see~\cite{Curry-Ebenfelt2018-preprint}.

\end{comment}

Let $\hat{\theta} = e^{\Upsilon} \theta$ be another contact form,
and denote by $\widehat{T}$ the corresponding Reeb vector field.
The admissible coframe corresponding to $(\widehat{T}, \tensor{Z}{_{1}}, \tensor{Z}{_{\overline{1}}})$
is given by
\begin{equation}
\label{eq:transformation-law-of-admissible-coframe}
	(\hat{\theta},
	\tensor{\hat{\theta}}{^{1}} = \tensor{\theta}{^{1}} + \sqrt{- 1} \tensor{\Upsilon}{^{1}} \theta,
	\tensor{\hat{\theta}}{^{\overline{1}}}
	= \tensor{\theta}{^{\overline{1}}} - \sqrt{- 1} \tensor{\Upsilon}{^{\overline{1}}} \theta).
\end{equation}
Under this conformal change,
the sub-Laplacian $\widehat{\Delta}_{b}$ with respect to $\hat{\theta}$ satisfies
\begin{equation}
\label{eq:transformation-law-of-sub-Laplacian}
	e^{\Upsilon} \widehat{\Delta}_{b} u
	= \Delta_{b} u - \tensor{\Upsilon}{^{1}} \tensor{u}{_{1}}
		- \tensor{\Upsilon}{^{\overline{1}}} \tensor{u}{_{\overline{1}}};
\end{equation}
see \cite{Stanton1989}*{Lemma 1.8} for example.

An important example of strictly pseudoconvex CR manifolds
is the boundary of a strictly pseudoconvex domain.
Let $\Omega$ be a relatively compact domain in a two-dimensional complex manifold $X$
with smooth boundary $M = \partial \Omega$.
Then there exists a smooth function $\rho$ on $X$ such that
\begin{equation}
	\Omega = \rho^{-1}((- \infty, 0)), \quad
	M = \rho^{- 1}(0), \quad
	d \rho \neq 0 \ \text{on} \ M;
\end{equation}
such a $\rho$ is called a \emph{defining function} of $\Omega$.
A domain $\Omega$ is said to be \emph{strictly pseudoconvex}
if we can take a defining function $\rho$ of $\Omega$
such that $d d^{c} \rho$ defines a K\"{a}hler form near $M$.
The boundary of a strictly pseudoconvex domain
is a closed strictly pseudoconvex real hypersurface;
the one-form $d^{c} \rho |_{M}$ is a contact form on $M$.

\section{CR pluriharmonic functions and pseudo-Einstein condition}
\label{section:CR-pluriharmonic-functions-and-pseudo-Einstein-condition}

Let $(M, T^{1, 0} M)$ be a strictly pseudoconvex CR manifold of dimension three
and $\theta$ a contact form on $M$.
We first discuss a CR analog of $d^{c}$.
Such an operator was first introduced by Rumin~\cite{Rumin1994} in his study of the Rumin complex.
Here,
however,
we define it as an operator taking values in $\Omega^{1}(M)$,
which implicitly appears in the proof of~\cite{Hirachi2014}*{Lemma 3.2}.

\begin{lemma}
	The differential operator
	\begin{equation}
	\label{eq:definition-of-d^c_CR}
		d^{c}_{\CR} \colon C^{\infty}(M) \to \Omega^{1} (M) ; \quad
		u \mapsto \frac{\sqrt{- 1}}{2} \pqty{\tensor{u}{_{\overline{1}}} \tensor{\theta}{^{\overline{1}}}
			- \tensor{u}{_{1}} \tensor{\theta}{^{1}}} + \frac{1}{2} \Delta_{b} u \cdot \theta
	\end{equation}
	is independent of the choice of $\theta$.
\end{lemma}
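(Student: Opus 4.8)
The plan is to verify directly that $d^{c}_{\CR}$ is unchanged under a conformal change of contact form $\hat{\theta} = e^{\Upsilon} \theta$, using the same underlying holomorphic frame $\tensor{Z}{_{1}}, \tensor{Z}{_{\overline{1}}}$ throughout. The crucial preliminary observation is that only the Reeb direction is altered by the change of contact form: by \cref{eq:transformation-law-of-admissible-coframe}, the vectors $\tensor{Z}{_{1}}, \tensor{Z}{_{\overline{1}}}$ remain dual to $\tensor{\hat{\theta}}{^{1}}, \tensor{\hat{\theta}}{^{\overline{1}}}$. Since the first covariant derivative of a \emph{function} is just the directional derivative along the frame, independent of the connection, the components $\tensor{u}{_{1}} = \tensor{Z}{_{1}} u$ and $\tensor{u}{_{\overline{1}}} = \tensor{Z}{_{\overline{1}}} u$ are insensitive to the change; that is, writing $\tensor{\hat{u}}{_{1}}, \tensor{\hat{u}}{_{\overline{1}}}$ for the derivatives taken with respect to $\hat{\theta}$, one has $\tensor{\hat{u}}{_{1}} = \tensor{u}{_{1}}$ and $\tensor{\hat{u}}{_{\overline{1}}} = \tensor{u}{_{\overline{1}}}$. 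Only the transverse coefficient and the sub-Laplacian are genuinely affected, and the latter is governed by \cref{eq:transformation-law-of-sub-Laplacian}.

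Next I would substitute these data into the defining formula of $d^{c}_{\CR}$ written for $\hat{\theta}$, namely $\frac{\sqrt{-1}}{2}\bigl(\tensor{u}{_{\overline{1}}} \tensor{\hat{\theta}}{^{\overline{1}}} - \tensor{u}{_{1}} \tensor{\hat{\theta}}{^{1}}\bigr) + \frac{1}{2} \widehat{\Delta}_{b} u \cdot \hat{\theta}$, and expand using \cref{eq:transformation-law-of-admissible-coframe,eq:transformation-law-of-sub-Laplacian}. The $\tensor{\theta}{^{1}}$- and $\tensor{\theta}{^{\overline{1}}}$-parts immediately reproduce the first bracketed term of the original definition. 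The leftover contributions are all proportional to $\theta$: the corrections $\sqrt{-1} \tensor{\Upsilon}{^{1}} \theta$ and $-\sqrt{-1} \tensor{\Upsilon}{^{\overline{1}}} \theta$ hidden in $\tensor{\hat{\theta}}{^{1}}$ and $\tensor{\hat{\theta}}{^{\overline{1}}}$ produce, after multiplying by $\frac{\sqrt{-1}}{2}$ and using $(\sqrt{-1})^{2} = -1$, the term $\frac{1}{2}\bigl(\tensor{u}{_{1}} \tensor{\Upsilon}{^{1}} + \tensor{u}{_{\overline{1}}} \tensor{\Upsilon}{^{\overline{1}}}\bigr)\theta$; meanwhile $\frac{1}{2} \widehat{\Delta}_{b} u \cdot \hat{\theta} = \frac{1}{2}\bigl(e^{\Upsilon} \widehat{\Delta}_{b} u\bigr)\theta = \frac{1}{2}\bigl(\Delta_{b} u - \tensor{\Upsilon}{^{1}} \tensor{u}{_{1}} - \tensor{\Upsilon}{^{\overline{1}}} \tensor{u}{_{\overline{1}}}\bigr)\theta$.

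Finally, the two first-order $\theta$-contributions cancel exactly, leaving $\frac{1}{2} \Delta_{b} u \cdot \theta$, so that the expression computed for $\hat{\theta}$ coincides termwise with $d^{c}_{\CR} u$ computed for $\theta$. I expect no serious obstacle: the whole argument is a short bookkeeping computation, and the only points requiring care are the initial identification $\tensor{\hat{u}}{_{1}} = \tensor{u}{_{1}}$, $\tensor{\hat{u}}{_{\overline{1}}} = \tensor{u}{_{\overline{1}}}$, and the precise matching of the cross terms $\tensor{u}{_{1}} \tensor{\Upsilon}{^{1}}$ and $\tensor{u}{_{\overline{1}}} \tensor{\Upsilon}{^{\overline{1}}}$, whose cancellation is exactly what forces the coefficients $\frac{\sqrt{-1}}{2}$ and $\frac{1}{2}$ in the definition to be the right ones.
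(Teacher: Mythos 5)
Your proposal is correct and follows essentially the same route as the paper's own proof, which simply invokes the transformation laws \cref{eq:transformation-law-of-admissible-coframe} and \cref{eq:transformation-law-of-sub-Laplacian} without displaying the computation; you have carried out exactly that calculation, and the cancellation of the cross term $\frac{1}{2}\left(u_{1} \Upsilon^{1} + u_{\overline{1}} \Upsilon^{\overline{1}}\right)\theta$ against the correction in $e^{\Upsilon} \widehat{\Delta}_{b} u$ is verified correctly. Your preliminary observation that $u_{1} = Z_{1} u$ and $u_{\overline{1}} = Z_{\overline{1}} u$ are insensitive to the conformal change (first derivatives of functions being connection-independent, with the frame kept fixed) is the right justification for the one point the paper leaves implicit.
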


\begin{proof}
	Consider the conformal change $\hat{\theta} = e^{\Upsilon} \theta$.
	The independence of $d^{c}_{\CR}$ in $\theta$
	follows from the transformation laws 
	of an admissible coframe \cref{eq:transformation-law-of-admissible-coframe}
	and the sub-Laplacian \cref{eq:transformation-law-of-sub-Laplacian}.
\end{proof}

As in complex geometry,
CR pluriharmonic functions are smooth functions annihilated by $d d^{c}_{\CR}$.

\begin{lemma}
\label{lem:formula-of-dd^c_CR}
	For $u \in C^{\infty}(M)$,
	\begin{equation}
	\label{eq:formula-of-dd^c_CR}
		d d^{c}_{\CR} u
		= \tensor{P}{_{1}} u \cdot \theta \wedge \tensor{\theta}{^{1}}
			+ \tensor{P}{_{\overline{1}}} u \cdot \theta \wedge \tensor{\theta}{^{\overline{1}}},
	\end{equation}
	where
	\begin{equation}
		\tensor{P}{_{1}} u
		= \tensor{u}{_{\overline{1}}^{\overline{1}}_{1}}
			+ \sqrt{- 1} \tensor{A}{_{1}_{1}} \tensor{u}{^{1}},
		\qquad
		\tensor{P}{_{\overline{1}}} u
		= \tensor{u}{_{1}^{1}_{\overline{1}}}
			- \sqrt{- 1} \tensor{A}{_{\overline{1}}_{\overline{1}}} \tensor{u}{^{\overline{1}}}.
	\end{equation}
	In particular,
	u is a CR pluriharmonic function if and only if $d d^{c}_{\CR} u = 0$.
\end{lemma}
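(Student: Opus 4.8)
The plan is to compute $d d^{c}_{\CR} u$ directly from the definition \cref{eq:definition-of-d^c_CR} and the structure equations, and then read off the coefficients. By linearity of $d$,
\begin{equation*}
	d d^{c}_{\CR} u
	= \frac{\sqrt{-1}}{2}\, d\pqty{\tensor{u}{_{\overline{1}}} \tensor{\theta}{^{\overline{1}}} - \tensor{u}{_{1}} \tensor{\theta}{^{1}}}
		+ \frac{1}{2}\, d(\Delta_{b} u \cdot \theta),
\end{equation*}
and I would expand each piece using $d\theta = \sqrt{-1}\tensor{l}{_{1}_{\overline{1}}}\tensor{\theta}{^{1}}\wedge\tensor{\theta}{^{\overline{1}}}$, the structure equation for $d\tensor{\theta}{^{1}}$, and its conjugate. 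The key device is that the differential of each scalar coefficient, expanded in the admissible coframe, contributes a connection term proportional to $\tensor{\omega}{_{1}^{1}}$ that cancels exactly against the $\tensor{\theta}{^{1}}\wedge\tensor{\omega}{_{1}^{1}}$ term of $d\tensor{\theta}{^{1}}$; this replaces ordinary frame derivatives by the covariant derivatives $\tensor{u}{_{1}_{,}_{j}}$ automatically, so no connection form survives in the final answer.

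Next I would collect the resulting two-form by type. The $\tensor{\theta}{^{1}}\wedge\tensor{\theta}{^{\overline{1}}}$ component has coefficient proportional to $\tensor{u}{_{1}_{\overline{1}}}+\tensor{u}{_{\overline{1}}_{1}}+\tensor{l}{_{1}_{\overline{1}}}\Delta_{b}u$, which vanishes identically by the definition \cref{eq:sub-Laplacian} of $\Delta_{b}$; this is the consistency check that no $(1,1)$-part remains. The $\theta\wedge\tensor{\theta}{^{1}}$ component collects three contributions: the torsion term $\tensor{A}{^{\overline{1}}_{1}}\tensor{u}{_{\overline{1}}}$ from $d\tensor{\theta}{^{\overline{1}}}$ (which equals $\tensor{A}{_{1}_{1}}\tensor{u}{^{1}}$ after raising indices with $\tensor{l}{_{1}_{\overline{1}}}$), the term $-\tensor{u}{_{1}_{0}}$, and the term coming from $d(\Delta_{b}u)\wedge\theta$. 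Using \cref{eq:another-form-of-sub-Laplacian} to rewrite $\tensor{u}{_{\overline{1}}^{\overline{1}}}$ and hence the derivative of $\Delta_{b}u$, together with the commutator $\tensor{u}{_{0}_{1}}-\tensor{u}{_{1}_{0}}=\tensor{A}{_{1}_{1}}\tensor{u}{^{1}}$ from \cref{eq:commutator-of-covariant-derivatives}, this coefficient reduces precisely to $\tensor{P}{_{1}} u=\tensor{u}{_{\overline{1}}^{\overline{1}}_{1}}+\sqrt{-1}\tensor{A}{_{1}_{1}}\tensor{u}{^{1}}$. The $\theta\wedge\tensor{\theta}{^{\overline{1}}}$ coefficient is the conjugate computation, yielding $\tensor{P}{_{\overline{1}}}u$.

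For the final assertion I would argue through the Poincar\'e lemma rather than through the explicit formula, since the definition \cref{eq:definition-of-d^c_CR} already makes it transparent. If $u=\Re f$ with $\overline{\partial}_{b}f=0$ locally, set $v=\Im f$; then $\tensor{f}{_{\overline{1}}}=0$ gives $\tensor{u}{_{\overline{1}}}=-\sqrt{-1}\tensor{v}{_{\overline{1}}}$, whence $\tensor{u}{_{1}}=\sqrt{-1}\tensor{v}{_{1}}$ by reality, while comparing the two forms in \cref{eq:another-form-of-sub-Laplacian} applied to $f$ and $\overline{f}$ (using $\tensor{f}{_{\overline{1}}^{\overline{1}}}=0$) yields $\Delta_{b}u=\tensor{v}{_{0}}$. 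Substituting these into \cref{eq:definition-of-d^c_CR} collapses it to $d^{c}_{\CR}u=\tfrac{1}{2}\,dv$, so $d d^{c}_{\CR}u=0$. Conversely, if $d d^{c}_{\CR}u=0$, then on any simply connected neighborhood the Poincar\'e lemma provides a function $v$ with $d^{c}_{\CR}u=\tfrac{1}{2}\,dv$, necessarily real since $d^{c}_{\CR}u$ is real; comparing coframe components forces $\tensor{v}{_{\overline{1}}}=\sqrt{-1}\tensor{u}{_{\overline{1}}}$, and then $f:=u+\sqrt{-1}v$ satisfies $\tensor{f}{_{\overline{1}}}=\tensor{u}{_{\overline{1}}}+\sqrt{-1}\tensor{v}{_{\overline{1}}}=0$, i.e.\ $u=\Re f$ is CR pluriharmonic.

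The main obstacle I expect is the bookkeeping in the first two paragraphs: keeping the connection-form cancellations, the index raising and lowering with $\tensor{l}{_{1}_{\overline{1}}}$, and the two commutator identities straight so that the $\theta\wedge\tensor{\theta}{^{1}}$ coefficient lands on exactly $\tensor{u}{_{\overline{1}}^{\overline{1}}_{1}}+\sqrt{-1}\tensor{A}{_{1}_{1}}\tensor{u}{^{1}}$ rather than a term differing from it by torsion. The characterization, by contrast, is essentially formal once the reality of $d^{c}_{\CR}u$ and the conjugacy relation $\Delta_{b}u=\tensor{v}{_{0}}$ are in hand, and it does not even require the explicit formula \cref{eq:formula-of-dd^c_CR}.
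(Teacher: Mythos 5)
Your proposal is correct, and it splits naturally into two parts relative to the paper. For the formula \cref{eq:formula-of-dd^c_CR} you take essentially the same route as the paper: expanding $d d^{c}_{\CR} u$ in the admissible coframe (connection forms covariantize away, the $(1,1)$-part cancels by the definition \cref{eq:sub-Laplacian} of $\Delta_{b}$), you land on the coefficient $-\tfrac{1}{2}(\Delta_{b} u)_{1} - \tfrac{\sqrt{-1}}{2} u_{10} + \tfrac{\sqrt{-1}}{2} A_{11} u^{1}$ for $\theta \wedge \theta^{1}$, which you reduce to $P_{1} u$ via \cref{eq:another-form-of-sub-Laplacian} and the commutator $u_{01} - u_{10} = A_{11} u^{1}$ — this is exactly the paper's computation, which merely compresses the structure-equation bookkeeping you spell out, and your identification of the torsion contribution $A^{\overline{1}}{}_{1} u_{\overline{1}} = A_{11} u^{1}$ checks out. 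Where you genuinely diverge is the final characterization: the paper disposes of it in one line by citing \cite{Lee1988}*{Proposition 3.4}, whereas you prove it from scratch. Your argument is sound in both directions: for $u = \Re f$ with $v = \Im f$, the identities $u_{\overline{1}} = -\sqrt{-1}\, v_{\overline{1}}$ and $\Delta_{b} u = v_{0}$ (the latter from \cref{eq:another-form-of-sub-Laplacian} applied to $f$, or equivalently from the commutator identity) do collapse \cref{eq:definition-of-d^c_CR} to $d^{c}_{\CR} u = \tfrac{1}{2} dv$; conversely, closedness plus the Poincar\'e lemma yields a local primitive $v$ with $v_{\overline{1}} = \sqrt{-1}\, u_{\overline{1}}$, so $f = u + \sqrt{-1}\, v$ satisfies $f_{\overline{1}} = 0$. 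What this buys over the citation: it is self-contained, it bypasses the explicit formula \cref{eq:formula-of-dd^c_CR} entirely (invoking Lee's criterion $P_{1} u = 0$ requires it), and the relation $d^{c}_{\CR} u = \tfrac{1}{2} dv$ it produces is precisely the mechanism underlying \cref{lem:real-part-of-CR-holomorphic}; the paper's citation is of course shorter. One cosmetic nit: the primitive is not \emph{necessarily} real but \emph{can be chosen} real — if $dv$ is real then $\Im v$ is locally constant, so replace $v$ by $\Re v$.
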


\begin{proof}
	We first show the equality \cref{eq:formula-of-dd^c_CR}.
	From \cref{eq:sub-Laplacian,eq:definition-of-d^c_CR},
	it follows that
	\begin{align}
		d d^{c}_{\CR} u
		&= \bqty{- \frac{1}{2} \tensor{(\Delta_{b} u)}{_{1}} - \frac{\sqrt{- 1}}{2} \tensor{u}{_{1}_{0}}
			+ \frac{\sqrt{- 1}}{2} \tensor{A}{_{1}_{1}} \tensor{u}{^{1}}} \theta \wedge \tensor{\theta}{^{1}} \\
		&\qquad + \bqty{- \frac{1}{2} \tensor{(\Delta_{b} u)}{_{\overline{1}}} + \frac{\sqrt{- 1}}{2} \tensor{u}{_{\overline{1}}_{0}}
			- \frac{\sqrt{- 1}}{2} \tensor{A}{_{\overline{1}}_{\overline{1}}} \tensor{u}{^{\overline{1}}}} \theta \wedge \tensor{\theta}{^{\overline{1}}}.
	\end{align}
	Hence it suffices to show
	\begin{equation}
		\tensor{P}{_{1}} u
		= - \frac{1}{2} \tensor{(\Delta_{b} u)}{_{1}} - \frac{\sqrt{- 1}}{2} \tensor{u}{_{1}_{0}}
			+ \frac{\sqrt{- 1}}{2} \tensor{A}{_{1}_{1}} \tensor{u}{^{1}};
	\end{equation}
	the other part is the complex conjugate of this equality.
	By using \cref{eq:another-form-of-sub-Laplacian,eq:commutator-of-covariant-derivatives},
	we have
	\begin{align}
		- \frac{1}{2} \tensor{(\Delta_{b} u)}{_{1}} - \frac{\sqrt{- 1}}{2} \tensor{u}{_{1}_{0}}
			+ \frac{\sqrt{- 1}}{2} \tensor{A}{_{1}_{1}} \tensor{u}{^{1}}
		&= \tensor{u}{_{\overline{1}}^{\overline{1}}_{1}} + \frac{\sqrt{- 1}}{2} \pqty{\tensor{u}{_{0}_{1}} - \tensor{u}{_{1}_{0}}}
			+ \frac{\sqrt{- 1}}{2} \tensor{A}{_{1}_{1}} \tensor{u}{^{1}} \\
		&= \tensor{u}{_{\overline{1}}^{\overline{1}}_{1}} + \sqrt{- 1} \tensor{A}{_{1}_{1}} \tensor{u}{^{1}} \\
		&= \tensor{P}{_{1}} u.
	\end{align}
	The latter statement is a consequence of~\cite{Lee1988}*{Proposition 3.4}.
\end{proof}

Each $u \in \mathcal{P}$ defines a cohomology class $[d^{c}_{\CR} u]$ in $H^{1}(M, \mathbb{R})$.
The following lemma is obtained from~\cite{Lee1988}*{Lemma 3.1}.

\begin{lemma}
\label{lem:real-part-of-CR-holomorphic}
	For $u \in \mathcal{P}$,
	the cohomology class $[d^{c}_{\CR} u]$ is equal to zero
	if and only if $u$ is the real part of a CR holomorphic function globally.
\end{lemma}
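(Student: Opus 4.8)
The plan is to reduce the global statement to a single local identity relating $d^{c}_{\CR} u$ to a harmonic conjugate of $u$, and then to run a de Rham patching argument. Since $u \in \mathcal{P}$, \cref{lem:formula-of-dd^c_CR} gives $d d^{c}_{\CR} u = 0$, so $d^{c}_{\CR} u$ is a closed (real) one-form and the class $[d^{c}_{\CR} u] \in H^{1}(M, \mathbb{R})$ is well-defined. The key claim I would establish is that on any connected open set $U$ on which $u = \Re f$ for a CR holomorphic function $f = u + \sqrt{-1}\, v$, one has
\[
	d^{c}_{\CR} u = \tfrac{1}{2}\, d v .
\]

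The horizontal components are immediate: $\overline{\partial}_{b} f = 0$ means $\tensor{f}{_{\overline{1}}} = 0$, whence $\tensor{v}{_{\overline{1}}} = \sqrt{-1}\, \tensor{u}{_{\overline{1}}}$ and, by conjugation, $\tensor{v}{_{1}} = - \sqrt{-1}\, \tensor{u}{_{1}}$; these match the $\tensor{\theta}{^{1}}$ and $\tensor{\theta}{^{\overline{1}}}$ parts of \cref{eq:definition-of-d^c_CR}. The Reeb component reduces to the identity $\tensor{v}{_{0}} = \Delta_{b} u$, which I would obtain from $\tensor{f}{_{\overline{1}}} = 0$ together with \cref{eq:another-form-of-sub-Laplacian}: the first form there gives $\Delta_{b} f = - \sqrt{-1}\, \tensor{f}{_{0}}$, i.e.\ $\tensor{f}{_{0}} = \sqrt{-1}\, \Delta_{b} f$, and taking imaginary parts yields $\tensor{v}{_{0}} = \Delta_{b} u$. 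This local identity is essentially the content of \cite{Lee1988}*{Lemma 3.1}, from which the lemma is said to follow.

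With the identity in hand, both implications are short. If $u = \Re f$ globally with $f$ CR holomorphic, then $v = \Im f$ is a globally defined smooth function and $d^{c}_{\CR} u = \tfrac{1}{2}\, d v$ is exact, so $[d^{c}_{\CR} u] = 0$. Conversely, assume $[d^{c}_{\CR} u] = 0$; as $d^{c}_{\CR} u$ is real and closed, there is a real $w \in C^{\infty}(M)$ with $d^{c}_{\CR} u = d w$. Cover $M$ by connected open sets $U_{\alpha}$ on which $u = \Re f_{\alpha}$ for CR holomorphic $f_{\alpha}$, with $v_{\alpha} = \Im f_{\alpha}$. On $U_{\alpha}$ the local identity gives $\tfrac{1}{2}\, d v_{\alpha} = d w$, so $v_{\alpha} - 2 w$ equals a constant $c_{\alpha}$. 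Then the globally defined function $f := u + 2 \sqrt{-1}\, w$ satisfies $f = f_{\alpha} - \sqrt{-1}\, c_{\alpha}$ on each $U_{\alpha}$, so $\overline{\partial}_{b} f = 0$ there; since the $U_{\alpha}$ cover $M$, $f$ is a global CR holomorphic function with $\Re f = u$.

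I expect the main obstacle to be the Reeb-direction identity $\tensor{v}{_{0}} = \Delta_{b} u$: it is the only step where the second-order pseudo-Hermitian structure enters, tying the vertical component of $d^{c}_{\CR} u$ to the sub-Laplacian through the commutation relations \cref{eq:commutator-of-covariant-derivatives} (as already encoded in \cref{eq:another-form-of-sub-Laplacian}). The remaining ingredients, namely the horizontal part of the local identity and the cohomological patching, are a direct transcription of the classical correspondence between a harmonic function, its conjugate, and the periods of $d^{c} u$.
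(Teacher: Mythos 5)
Your proof is correct and takes essentially the same route as the paper: the paper deduces this lemma directly from \cite{Lee1988}*{Lemma 3.1}, which is precisely your local identity $2\, d^{c}_{\CR} u = d v$ for a CR holomorphic $f = u + \sqrt{-1}\, v$ (including the Reeb-direction relation $\tensor{v}{_{0}} = \Delta_{b} u$), and the globalization via exactness and patching of the locally defined conjugates is the same standard argument.
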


Assume that $M$ is realized as the boundary of a two-dimensional strictly pseudoconvex domain $\Omega$.
Take a defining function $\rho$ of $\Omega$ with $\theta = d^{c} \rho |_{M}$.

\begin{lemma}
\label{lem:characterization-of-d^c_CR-via-pluriharmonic-extension}
	For each $u \in C^{\infty}(M)$,
	there exists a smooth extension $\widetilde{u}$ of $u$ to $\overline{\Omega}$ such that
	$d d^{c} \widetilde{u} |_{M}$ has vanishing $(1, 1)$-part.
	Moreover,
	such a $\widetilde{u}$ is unique modulo $O(\rho^{2})$,
	and $d^{c} \widetilde{u} |_{M}$ coincides with $d^{c}_{\CR} u$.
\end{lemma}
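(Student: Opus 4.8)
The plan is to reduce the whole statement to a single computation of the $\tensor{\theta}{^{1}}\wedge\tensor{\theta}{^{\overline{1}}}$-component of $dd^{c}\widetilde{u}|_{M}$ in terms of the Reeb-direction coefficient of $d^{c}\widetilde{u}|_{M}$. First I would observe that, since pullback to $M$ commutes with $d$, one has $dd^{c}\widetilde{u}|_{M} = d(d^{c}\widetilde{u}|_{M})$, so the $(1,1)$-part is governed entirely by the one-form $\alpha := d^{c}\widetilde{u}|_{M}$. Writing $\alpha = c\,\theta + \alpha(\tensor{Z}{_{1}})\tensor{\theta}{^{1}} + \alpha(\tensor{Z}{_{\overline{1}}})\tensor{\theta}{^{\overline{1}}}$ with $c = \alpha(T)$, I would note that the tangential coefficients come directly from $d^{c} = (\sqrt{-1}/2)(\overline{\partial} - \partial)$ and depend only on the tangential derivatives of $\widetilde{u}$, hence only on $u$: indeed $\alpha(\tensor{Z}{_{1}}) = -(\sqrt{-1}/2)\tensor{u}{_{1}}$ and $\alpha(\tensor{Z}{_{\overline{1}}}) = (\sqrt{-1}/2)\tensor{u}{_{\overline{1}}}$, since $\overline{\partial}\widetilde{u}$ annihilates $\tensor{Z}{_{1}}$ and $\tensor{Z}{_{1}}\widetilde{u} = \tensor{u}{_{1}}$. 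Thus the only freedom in $\alpha$ coming from the choice of extension sits in the single function $c = d^{c}\widetilde{u}(T)$.

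The heart of the argument is then to compute the $\tensor{\theta}{^{1}}\wedge\tensor{\theta}{^{\overline{1}}}$-coefficient of $d\alpha$. Using the structure equations together with \cref{eq:commutator-of-covariant-derivatives,eq:another-form-of-sub-Laplacian}, I would show this coefficient equals $\sqrt{-1}\tensor{l}{_{1}_{\overline{1}}}(c - \tfrac{1}{2}\Delta_{b}u)$: the term $\sqrt{-1}\tensor{l}{_{1}_{\overline{1}}}c$ comes from $c\,d\theta$, while differentiating the horizontal part $(\sqrt{-1}/2)(\tensor{u}{_{\overline{1}}}\tensor{\theta}{^{\overline{1}}} - \tensor{u}{_{1}}\tensor{\theta}{^{1}})$ produces $(\sqrt{-1}/2)(\tensor{u}{_{1}_{\overline{1}}} + \tensor{u}{_{\overline{1}}_{1}}) = -(\sqrt{-1}/2)\tensor{l}{_{1}_{\overline{1}}}\Delta_{b}u$, the torsion and connection contributions landing only in the $\theta\wedge\tensor{\theta}{^{1}}$ and $\theta\wedge\tensor{\theta}{^{\overline{1}}}$ parts, consistently with \cref{lem:formula-of-dd^c_CR}. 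Consequently the $(1,1)$-part of $dd^{c}\widetilde{u}|_{M}$ vanishes if and only if $c = \tfrac{1}{2}\Delta_{b}u$, in which case $\alpha = \tfrac{1}{2}\Delta_{b}u\cdot\theta + (\sqrt{-1}/2)(\tensor{u}{_{\overline{1}}}\tensor{\theta}{^{\overline{1}}} - \tensor{u}{_{1}}\tensor{\theta}{^{1}}) = d^{c}_{\CR}u$, which is exactly the claimed identification.

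Finally I would settle existence and uniqueness from the dependence of $c$ on the extension. Two extensions of $u$ differ by $\rho v$ for some smooth $v$, and a direct evaluation on $M$, using $\rho|_{M}=0$ and $\theta = d^{c}\rho|_{M}$, gives $d^{c}(\rho v)|_{M} = v|_{M}\,\theta$; hence replacing $\widetilde{u}$ by $\widetilde{u} + \rho v$ changes $c$ by exactly $v|_{M}$ and leaves the tangential coefficients untouched. Therefore the equation $c = \tfrac{1}{2}\Delta_{b}u$ has a unique solution for $v|_{M}$: starting from any smooth extension and correcting its first normal derivative produces the desired $\widetilde{u}$, and any two solutions differ by $\rho v$ with $v|_{M}=0$, that is, by $O(\rho^{2})$.

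I expect the main obstacle to be the bookkeeping in the structure-equation computation of the second step, specifically verifying that the Tanaka-Webster torsion and connection terms contribute nothing to the $\tensor{\theta}{^{1}}\wedge\tensor{\theta}{^{\overline{1}}}$-coefficient, so that it collapses cleanly into the combination $c - \tfrac{1}{2}\Delta_{b}u$.
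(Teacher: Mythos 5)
Your proposal is correct and follows essentially the same route as the paper: both arguments isolate the $\theta$-coefficient of $d^{c}\widetilde{u}|_{M}$ (your $c$, the paper's $\lambda$) as the only freedom in the extension, show the $\tensor{\theta}{^{1}} \wedge \tensor{\theta}{^{\overline{1}}}$-component of the pullback of $d d^{c}\widetilde{u}$ vanishes exactly when $c = \tfrac{1}{2}\Delta_{b} u$, and adjust by $\rho v$ (with $d^{c}(\rho v)|_{M} = v|_{M}\,\theta$, equivalently the paper's computation of the $(1,1)$-part of $d d^{c}(\rho v)|_{M}$) to get existence and uniqueness modulo $O(\rho^{2})$. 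The only cosmetic difference is that the paper computes the $(1,1)$-part of $d d^{c} u'|_{M}$ for a raw extension directly, whereas you differentiate the decomposed one-form $\alpha$; the bookkeeping you flag as the main obstacle works out as you predict, since the connection-form terms cancel between $d\tensor{u}{_{1}}$ and $d\tensor{\theta}{^{1}}$ and the torsion terms carry a factor of $\theta$.
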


\begin{proof}
	Take a smooth function $u^{\prime}$ on $\overline{\Omega}$ with $u^{\prime} |_{M} = u$.
	Then
	\begin{equation}
		d^{c} u^{\prime} |_{M}
		= \frac{\sqrt{- 1}}{2} (\tensor{u}{_{\overline{1}}} \tensor{\theta}{^{\overline{1}}}
			- \tensor{u}{_{1}} \tensor{\theta}{^{1}})
			+ \lambda \theta
	\end{equation}
	for some $\lambda \in C^{\infty}(M)$.
	Hence the $(1, 1)$-part of $d d^{c} u^{\prime} |_{M}$ is given by
	\begin{equation}
		\frac{\sqrt{- 1}}{2}
		\pqty{\tensor{u}{_{\overline{1}}_{1}} + \tensor{u}{_{1}_{\overline{1}}} + 2 \lambda \tensor{l}{_{1}_{\overline{1}}}}
		\tensor{\theta}{^{1}} \wedge \tensor{\theta}{^{\overline{1}}}.
	\end{equation}
	On the other hand,
	the $(1, 1)$-part of $d d^{c} (\rho v) |_{M}$ for $v \in C^{\infty} (\overline{\Omega})$ coincides with
	\begin{equation}
		\sqrt{- 1} v |_{M} \tensor{l}{_{1}_{\overline{1}}} \tensor{\theta}{^{1}} \wedge \tensor{\theta}{^{\overline{1}}}.
	\end{equation}
	If we choose $v$ so that $v |_{M} = 2^{- 1} \Delta_{b} u - \lambda$,
	the $(1, 1)$-part of $d d^{c} (u^{\prime} + \rho v) |_{M}$ vanishes,
	which gives the existence of $\widetilde{u}$.
	From the construction,
	it follows that $d^{c} \widetilde{u} |_{M} = d^{c}_{\CR} u$.
	The uniqueness of $\widetilde{u}$ modulo $O(\rho^{2})$
	is a consequence of the above computation of the $(1, 1)$-part of $d d^{c} (\rho v) |_{M}$.
\end{proof}

Set
\begin{equation}
	\tensor{W}{_{1}}
	= \tensor{\Scal}{_{1}} - \sqrt{- 1} \tensor{A}{_{1}_{1}_{,}^{1}},
	\qquad
	\tensor{W}{_{\overline{1}}}
	= \overline{\tensor{W}{_{1}}}
	= \tensor{\Scal}{_{\overline{1}}} + \sqrt{- 1} \tensor{A}{_{\overline{1}}_{\overline{1}}_{,}^{\overline{1}}},
\end{equation}
and define a two-form $W(\theta)$ on $M$ by
\begin{equation}
\label{eq:definition-of-representative}
	W(\theta)
	= \tensor{W}{_{1}} \theta \wedge \tensor{\theta}{^{1}}
		+ \tensor{W}{_{\overline{1}}} \theta \wedge \tensor{\theta}{^{\overline{1}}}.
\end{equation}
A contact form $\theta$ is said to be \emph{pseudo-Einstein} if $W(\theta) = 0$.

\begin{lemma}
	The two-form $W(\theta) / 2 \pi$ is closed
	and gives a representative of the first Chern class $c_{1}(T^{1, 0} M)$.
	In particular,
	$c_{1}(T^{1, 0} M) = 0$ in $H^{2}(M, \mathbb{R})$
	if $(M, T^{1, 0} M)$ admits a pseudo-Einstein contact form.
\end{lemma}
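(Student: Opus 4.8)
The plan is to identify $W(\theta)$, up to an exact term, with the Chern--Weil representative of $c_{1}(T^{1, 0} M)$ coming from the Tanaka--Webster connection. First I would note that $\nabla \tensor{Z}{_{1}} = \tensor{\omega}{_{1}^{1}} \tensor{Z}{_{1}}$ equips the complex line bundle $T^{1, 0} M$ with a connection whose curvature is $\tensor{\Omega}{_{1}^{1}} = d \tensor{\omega}{_{1}^{1}}$. This two-form is globally defined: under a change of local frame $\tensor{Z}{_{1}} \mapsto f \tensor{Z}{_{1}}$ the connection form changes by the exact term $d \log f$, so $d \tensor{\omega}{_{1}^{1}}$ is frame-independent, and being locally exact it is closed. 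Moreover, the structure equation $d \tensor{l}{_{1}_{\overline{1}}} = \tensor{\omega}{_{1}^{1}} \tensor{l}{_{1}_{\overline{1}}} + \tensor{l}{_{1}_{\overline{1}}} \tensor{\omega}{_{\overline{1}}^{\overline{1}}}$ gives $\tensor{\omega}{_{1}^{1}} + \overline{\tensor{\omega}{_{1}^{1}}} = d \log \tensor{l}{_{1}_{\overline{1}}}$, so the real part of $\tensor{\omega}{_{1}^{1}}$ is closed and hence $\tensor{\Omega}{_{1}^{1}}$ is purely imaginary. Consequently $(\sqrt{- 1} / 2 \pi) \tensor{\Omega}{_{1}^{1}}$ is a real closed form, and by Chern--Weil theory it represents $c_{1}(T^{1, 0} M)$.

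The main step is then a direct computation comparing this representative with $W(\theta)$. Using the explicit expression \cref{eq:curvature-form-of-TW-connection} for $\tensor{\Omega}{_{1}^{1}}$ together with $d \theta = \sqrt{- 1} \tensor{l}{_{1}_{\overline{1}}} \tensor{\theta}{^{1}} \wedge \tensor{\theta}{^{\overline{1}}}$, the $(1,1)$-term of $\sqrt{-1}\,\tensor{\Omega}{_{1}^{1}}$ becomes $\Scal \cdot d \theta$, while the $\theta \wedge \tensor{\theta}{^{1}}$ and $\theta \wedge \tensor{\theta}{^{\overline{1}}}$ terms carry exactly the torsion contributions $-\sqrt{-1}\tensor{A}{_{1}_{1}_{,}^{1}}$ and $\sqrt{-1}\tensor{A}{_{\overline{1}}_{\overline{1}}_{,}^{\overline{1}}}$ appearing in $\tensor{W}{_{1}}$ and $\tensor{W}{_{\overline{1}}}$. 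Expanding $d(\Scal \cdot \theta) = d \Scal \wedge \theta + \Scal \cdot d \theta$ and substituting $d \Scal = \tensor{\Scal}{_{1}} \tensor{\theta}{^{1}} + \tensor{\Scal}{_{\overline{1}}} \tensor{\theta}{^{\overline{1}}} + \tensor{\Scal}{_{0}} \theta$, I expect to obtain the identity
\begin{equation}
	\sqrt{- 1}\, \tensor{\Omega}{_{1}^{1}}
	= W(\theta) + d(\Scal \cdot \theta).
\end{equation}
This identity does double duty: it shows at once that $W(\theta)$ is closed (since $\tensor{\Omega}{_{1}^{1}}$ is closed) and that $W(\theta) / 2 \pi$ differs from the Chern--Weil representative $(\sqrt{- 1} / 2 \pi) \tensor{\Omega}{_{1}^{1}}$ by the exact form $(1 / 2 \pi) d(\Scal \cdot \theta)$. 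Hence $W(\theta) / 2 \pi$ is itself a closed representative of $c_{1}(T^{1, 0} M)$.

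The final assertion is then immediate. If $\theta$ is pseudo-Einstein, then $W(\theta) = 0$ by definition, so the zero form represents $c_{1}(T^{1, 0} M)$, and therefore $c_{1}(T^{1, 0} M) = 0$ in $H^{2}(M, \mathbb{R})$.

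I expect the only genuinely delicate points to be bookkeeping rather than conceptual: fixing the normalization and sign of the Chern--Weil representative so that the $+$ sign in the displayed identity is consistent with the definition of $\tensor{W}{_{1}}$, and verifying carefully the global well-definedness and purely imaginary character of $\tensor{\Omega}{_{1}^{1}}$. Once these conventions are pinned down, the lemma reduces to a one-line consequence of the curvature formula \cref{eq:curvature-form-of-TW-connection}.
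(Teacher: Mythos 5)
Your proposal is correct and takes essentially the same route as the paper: the paper's proof consists precisely of the identity $W(\theta) = \sqrt{- 1}\, \tensor{\Omega}{_{1}^{1}} - d\pqty{\Scal \cdot \theta} = \sqrt{- 1}\, d \tensor{\omega}{_{1}^{1}} - d\pqty{\Scal \cdot \theta}$, read off from \cref{eq:curvature-form-of-TW-connection}, followed by the Chern--Weil conclusion. Your extra verifications (frame-independence and pure imaginarity of $\tensor{\Omega}{_{1}^{1}}$, and the sign bookkeeping against $d \theta = \sqrt{- 1} \tensor{l}{_{1}_{\overline{1}}} \tensor{\theta}{^{1}} \wedge \tensor{\theta}{^{\overline{1}}}$) are accurate details the paper leaves implicit.
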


\begin{proof}
	It follows from \cref{eq:curvature-form-of-TW-connection} that
	\begin{equation}
		W(\theta)
		= \sqrt{- 1} \tensor{\Omega}{_{1}^{1}} -  d \pqty{\Scal \cdot \theta}
		= \sqrt{- 1} d \tensor{\omega}{_{1}^{1}} - d \pqty{\Scal \cdot \theta}.
	\end{equation}
	Hence $W(\theta) / 2 \pi$ is closed
	and represents the cohomology class $c_{1}(T^{1, 0} M)$.
\end{proof}

The two-form $W(\theta)$ depends on the choice of $\theta$,
but $d d^{c}_{\CR}$ appears in its transformation law under conformal change.

\begin{lemma}
	For another contact form $\hat{\theta} = e^{\Upsilon} \theta$,
	\begin{equation}
		W(\hat{\theta})
		= W(\theta) - 3 d d^{c}_{\CR} \Upsilon.
	\end{equation}
	In particular,
	in the case that $\theta$ is pseudo-Einstein,
	so is $\hat{\theta}$ if and only if $\Upsilon$ is CR pluriharmonic.
\end{lemma}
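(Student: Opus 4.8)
The plan is to reduce the stated two-form identity to a single scalar identity and then verify it by substituting the known conformal transformation laws for the Tanaka-Webster data. First I would observe that, since $\theta \wedge \theta = 0$, the coframe transformation law \cref{eq:transformation-law-of-admissible-coframe} gives $\hat{\theta} \wedge \tensor{\hat{\theta}}{^{1}} = e^{\Upsilon} \theta \wedge \tensor{\theta}{^{1}}$ and likewise $\hat{\theta} \wedge \tensor{\hat{\theta}}{^{\overline{1}}} = e^{\Upsilon} \theta \wedge \tensor{\theta}{^{\overline{1}}}$. Writing $W(\hat{\theta})$ in the hatted coframe via \cref{eq:definition-of-representative} and converting back to the unhatted one, the claim becomes the pair of scalar identities
\begin{equation}
	e^{\Upsilon} \tensor{\widehat{W}}{_{1}}
	= \tensor{W}{_{1}} - 3 \tensor{P}{_{1}} \Upsilon
\end{equation}
together with its complex conjugate, where $\tensor{P}{_{1}}$ is as in \cref{lem:formula-of-dd^c_CR}. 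In the course of this reduction I use $\tensor{\hat{l}}{_{1}_{\overline{1}}} = e^{\Upsilon} \tensor{l}{_{1}_{\overline{1}}}$, which follows by comparing the $\tensor{\theta}{^{1}} \wedge \tensor{\theta}{^{\overline{1}}}$-parts of $d \hat{\theta} = \sqrt{-1} \tensor{\hat{l}}{_{1}_{\overline{1}}} \tensor{\hat{\theta}}{^{1}} \wedge \tensor{\hat{\theta}}{^{\overline{1}}}$ and of $d(e^{\Upsilon} \theta)$; this is needed to relate the raised-index derivatives in the two frames.

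Next I would feed in the classical conformal transformation formulas for the scalar curvature $\Scal$, for the torsion $\tensor{A}{_{1}_{1}}$, and for the connection form $\tensor{\omega}{_{1}^{1}}$ under $\hat{\theta} = e^{\Upsilon} \theta$, as recorded in \cite{Lee1988}. From these one obtains the hatted covariant derivatives $\tensor{\widehat{\Scal}}{_{1}}$ and $\tensor{\widehat{A}}{_{1}_{1}_{,}^{1}}$ (the latter computed with the hatted connection and raised with $\hat{l}$) in terms of the unhatted ones plus correction terms involving up to third derivatives of $\Upsilon$. Assembling $\tensor{\widehat{W}}{_{1}} = \tensor{\widehat{\Scal}}{_{1}} - \sqrt{-1} \tensor{\widehat{A}}{_{1}_{1}_{,}^{1}}$ and using the commutator relations \cref{eq:commutator-of-covariant-derivatives} to normal-order the third-order terms, I expect the curvature terms to reproduce $\tensor{W}{_{1}}$ and the $\Upsilon$-terms to collapse exactly into $-3(\tensor{\Upsilon}{_{\overline{1}}^{\overline{1}}_{1}} + \sqrt{-1} \tensor{A}{_{1}_{1}} \tensor{\Upsilon}{^{1}}) = -3 \tensor{P}{_{1}} \Upsilon$, the coefficient $3$ arising as $n+2$ in dimension $2n+1$ with $n = 1$.

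As a cross-check I would also run the argument through the identity $W(\theta) = \sqrt{-1} d \tensor{\omega}{_{1}^{1}} - d(\Scal \cdot \theta)$ established in the proof of the preceding lemma from \cref{eq:curvature-form-of-TW-connection}. Then $W(\hat{\theta}) - W(\theta) = \sqrt{-1} \, d(\tensor{\widehat{\omega}}{_{1}^{1}} - \tensor{\omega}{_{1}^{1}}) - d(\widehat{\Scal} \hat{\theta} - \Scal \theta)$ is manifestly exact, and it remains only to identify the bracketed one-form with $-3 \, d^{c}_{\CR} \Upsilon$ up to a closed form. The main obstacle in either route is purely computational bookkeeping: transforming the third covariant derivatives of $\Upsilon$ correctly and tracking the torsion contributions so that the numerical coefficient is exactly $3$ and the torsion term matches $\sqrt{-1} \tensor{A}{_{1}_{1}} \tensor{\Upsilon}{^{1}}$, with the commutators \cref{eq:commutator-of-covariant-derivatives} producing the requisite cancellations. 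Finally, for the \emph{in particular} clause: if $\theta$ is pseudo-Einstein then $W(\theta) = 0$, so $W(\hat{\theta}) = -3 \, d d^{c}_{\CR} \Upsilon$, which vanishes if and only if $d d^{c}_{\CR} \Upsilon = 0$, that is, by \cref{lem:formula-of-dd^c_CR}, if and only if $\Upsilon$ is CR pluriharmonic.
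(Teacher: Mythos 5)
Your proposal is correct and follows essentially the same route as the paper: both reduce the two-form identity, via $\hat{\theta} \wedge \tensor{\hat{\theta}}{^{1}} = e^{\Upsilon} \theta \wedge \tensor{\theta}{^{1}}$ (and its conjugate), to the scalar transformation law $e^{\Upsilon} \tensor{\widehat{W}}{_{1}} = \tensor{W}{_{1}} - 3 \tensor{P}{_{1}} \Upsilon$ and then conclude with \cref{lem:formula-of-dd^c_CR}. The only difference is that the paper cites \cite{Hirachi1993}*{Lemma 5.4} for this scalar identity, whereas you propose to re-derive it from the classical conformal transformation formulas of \cite{Lee1988}; that computation is standard and your sketch of it (including the torsion bookkeeping and the coefficient $3$) is consistent with the cited result.
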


\begin{proof}
	Under the conformal change $\hat{\theta} = e^{\Upsilon} \theta$,
	\begin{equation}
		e^{\Upsilon} \tensor{\widehat{W}}{_{1}}
		= \tensor{W}{_{1}} - 3 \tensor{P}{_{1}} \Upsilon;
	\end{equation}
	see~\cite{Hirachi1993}*{Lemma 5.4} for example.
	Hence
	\begin{equation}
		W(\hat{\theta})
		= \tensor{\widehat{W}}{_{1}} \hat{\theta} \wedge \tensor{\hat{\theta}}{^{1}}
			+ \tensor{\widehat{W}}{_{\overline{1}}} \hat{\theta} \wedge \tensor{\hat{\theta}}{^{\overline{1}}}
		= W(\theta) - 3 d d^{c}_{\CR} \Upsilon,
	\end{equation}
	which establishes the formula.
\end{proof}

\section{CR Paneitz operator in dimension three}
\label{section:CR-Paneitz-operator}

Let $(M, T^{1, 0} M)$ be a closed three-dimensional strictly pseudoconvex CR manifold
and $\theta$ a contact form on $M$.
The \emph{CR Paneitz operator} $P_{\theta}$ is defined by
\begin{equation}
	P_{\theta} u
	= (\tensor{P}{_{1}} u) \tensor{}{_{,}^{1}}
	= (\tensor{P}{_{\overline{1}}} u) \tensor{}{_{,}^{\overline{1}}}.
\end{equation}
The CR Paneitz operator is formally self-adjoint
and annihilates CR pluriharmonic functions~\cite{Graham-Lee1988}*{Section 2}.
We first show an integral formula for $P_{\theta}$ by using $d^{c}_{\CR}$.

\begin{lemma}
\label{lem:cohomological-expression-of-CR-Paneitz}
	For $u \in C^{\infty}(M)$,
	\begin{equation}
		\int_{M} u (P_{\theta} u) \theta \wedge d \theta
		= - \int_{M} d^{c}_{\CR} u \wedge d d^{c}_{\CR} u.
	\end{equation}
\end{lemma}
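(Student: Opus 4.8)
The plan is to compute the integrand $d^{c}_{\CR} u \wedge d d^{c}_{\CR} u$ pointwise and then integrate by parts on $M$. First I would substitute the explicit expression for $d d^{c}_{\CR} u$ from \cref{lem:formula-of-dd^c_CR} together with the defining formula \cref{eq:definition-of-d^c_CR} for $d^{c}_{\CR} u$. Since every term of $d d^{c}_{\CR} u$ carries the factor $\theta$, the component $\tfrac{1}{2} \Delta_{b} u \cdot \theta$ of $d^{c}_{\CR} u$ is annihilated in the wedge product, so only the $\tensor{\theta}{^{1}}$- and $\tensor{\theta}{^{\overline{1}}}$-parts contribute. Expanding and converting the resulting three-fold wedges to the volume form via $\theta \wedge d \theta = \sqrt{-1} \tensor{l}{_{1}_{\overline{1}}} \theta \wedge \tensor{\theta}{^{1}} \wedge \tensor{\theta}{^{\overline{1}}}$, I expect to arrive at
\begin{equation}
	d^{c}_{\CR} u \wedge d d^{c}_{\CR} u = \frac{1}{2} \pqty{\tensor{u}{^{1}} \tensor{P}{_{1}} u + \tensor{u}{^{\overline{1}}} \tensor{P}{_{\overline{1}}} u} \theta \wedge d \theta.
\end{equation}

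The second step is integration by parts on the closed manifold $M$. Using the expression $P_{\theta} u = (\tensor{P}{_{1}} u) \tensor{}{_{,}^{1}}$ and the vanishing of the integral of the divergence of the $(1,0)$-covector with component $u \tensor{P}{_{1}} u$, I obtain $\int_{M} u (P_{\theta} u) \theta \wedge d \theta = - \int_{M} \tensor{u}{^{1}} \tensor{P}{_{1}} u \, \theta \wedge d \theta$; the conjugate expression $P_{\theta} u = (\tensor{P}{_{\overline{1}}} u) \tensor{}{_{,}^{\overline{1}}}$ gives the analogous identity with barred indices. Averaging the two identities yields $-\tfrac{1}{2} \int_{M} (\tensor{u}{^{1}} \tensor{P}{_{1}} u + \tensor{u}{^{\overline{1}}} \tensor{P}{_{\overline{1}}} u) \theta \wedge d \theta$, which is precisely the negative of the integral of the pointwise formula above, establishing the claim.

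The step demanding the most care is the pointwise wedge computation: the several powers of $\sqrt{-1}$ and the antisymmetrization signs incurred in reordering $\tensor{\theta}{^{\overline{1}}} \wedge \theta \wedge \tensor{\theta}{^{1}}$ and $\tensor{\theta}{^{1}} \wedge \theta \wedge \tensor{\theta}{^{\overline{1}}}$ into the normalized form $\theta \wedge \tensor{\theta}{^{1}} \wedge \tensor{\theta}{^{\overline{1}}}$ must be tracked precisely, since a single sign slip would flip the final identity. By contrast, the integration-by-parts step is routine once one invokes the vanishing of the integral of a divergence on a closed pseudohermitian manifold, so the main risk lies entirely in the bookkeeping of the first step.
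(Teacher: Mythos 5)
Your proposal is correct and follows essentially the same route as the paper's proof: you expand $d^{c}_{\CR} u \wedge d d^{c}_{\CR} u$ via \cref{lem:formula-of-dd^c_CR} into $\tfrac{1}{2}\pqty{\tensor{u}{^{1}} \tensor{P}{_{1}} u + \tensor{u}{^{\overline{1}}} \tensor{P}{_{\overline{1}}} u}\, \theta \wedge d\theta$ (the $\theta$-component of $d^{c}_{\CR} u$ dropping out exactly as you note), then integrate by parts using the two divergence expressions of $P_{\theta}$. The only cosmetic difference is that you state the wedge identity pointwise and average the two integration-by-parts identities, whereas the paper performs the same manipulations directly under the integral sign.
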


\begin{proof}
	From the definition of $d^{c}_{\CR}$ \cref{eq:definition-of-d^c_CR} and \cref{lem:formula-of-dd^c_CR},
	it follows that
	\begin{align}
		\int_{M} d^{c}_{\CR} u \wedge d d^{c}_{\CR} u
		&= \frac{\sqrt{- 1}}{2}
			\int_{M} \bqty{ \tensor{u}{_{\overline{1}}} (\tensor{P}{_{1}} u)
					\tensor{\theta}{^{\overline{1}}} \wedge \theta \wedge \tensor{\theta}{^{1}}
					-  \tensor{u}{_{1}} (\tensor{P}{_{\overline{1}}} u)
					\tensor{\theta}{^{1}} \wedge \theta \wedge \tensor{\theta}{^{\overline{1}}}} \\
		&= \frac{1}{2} \int_{M} \bqty{\tensor{u}{^{1}} (\tensor{P}{_{1}} u)
			+ \tensor{u}{^{\overline{1}}} (\tensor{P}{_{\overline{1}}} u)} \theta \wedge d \theta \\
		&= - \int_{M} u (P_{\theta} u) \theta \wedge d \theta,
	\end{align}
	which completes the proof.
\end{proof}

Assume that $(M, T^{1, 0} M)$ is embeddable;
this means $(M, T^{1, 0} M)$ can be embedded in $\mathbb{C}^{N}$ for large $N$.
Without loss of generality,
we may assume that $M$ is connected.
Harvey and Lawson~\cite{Harvey-Lawson1975}*{Theorem 10.4} have shown that
$M$ bounds a two-dimensional strictly pseudoconvex Stein space.
Lempert~\cite{Lempert1995}*{Theorem 8.1} has proved that,
in this case,
$M$ can be realized as the boundary of a strictly pseudoconvex domain $\Omega$
in a two-dimensional complex projective manifold $X$.
Take a K\"{a}hler form $\omega$ on $X$
and a defining function $\rho$ of $\Omega$.
For sufficiently large $N > 0$,
the real $(1, 1)$-form
\begin{equation}
	\omega_{+}
	= N \omega - 2 d d^{c} \log (- \rho)
\end{equation}
defines an (even) asymptotically complex hyperbolic metric on $\Omega$;
see \cites{Epstein-Melrose-Mendoza1991,Guillarmou-Sa_Barreto2008,Matsumoto2016} for details.
Denote by $\Delta_{+}$ the (nonnegative) Riemannian Laplacian for $\omega_{+}$.

\begin{lemma}
\label{lem:harmonic-characterization-of-dd^c_CR}
	Let $F$ be a smooth function on $\overline{\Omega}$ with $F |_{M} = u$.
	The condition $\Delta_{+} F = O(\rho^{2})$ is equivalent to
	that $d d^{c} F |_{M}$ has vanishing $(1, 1)$-part.
	In particular,
	$d^{c} F |_{M} = d^{c}_{\CR} u$ if $\Delta_{+} F = O(\rho^{2})$.
\end{lemma}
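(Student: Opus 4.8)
The plan is to reduce everything to the Kähler trace identity and then read off boundary asymptotics. Since $\omega_{+}$ is Kähler and $\Omega$ has complex dimension two, for any smooth $F$ the (nonnegative) Laplacian satisfies
\begin{equation*}
	(\Delta_{+} F) \, \omega_{+}^{2}
	= - 4 \, d d^{c} F \wedge \omega_{+},
\end{equation*}
which is just the statement that the trace of $d d^{c} F$ with respect to $\omega_{+}$ equals $- \tfrac{1}{2} \Delta_{+} F$. Hence $\Delta_{+} F = O(\rho^{2})$ is a statement about the leading behaviour of $d d^{c} F \wedge \omega_{+}$ relative to that of $\omega_{+}^{2}$ as $\rho \to 0^{-}$, and the whole proof amounts to extracting these two leading terms.

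Next I would expand $\omega_{+}$ near $M$. From $d^{c} \log(- \rho) = \rho^{-1} d^{c} \rho$ one gets
\begin{equation*}
	\omega_{+}
	= N \omega + 2 \rho^{-2} \, d \rho \wedge d^{c} \rho - 2 \rho^{-1} \, d d^{c} \rho,
\end{equation*}
so the relevant orders are $\rho^{-2}, \rho^{-1}, \rho^{0}$. Because $(d \rho \wedge d^{c} \rho)^{2} = 0$, the top-order term of $\omega_{+}^{2}$ is the cross term $- 8 \rho^{-3} \, d \rho \wedge d^{c} \rho \wedge d d^{c} \rho$, of order $\rho^{-3}$, while the top-order term of $d d^{c} F \wedge \omega_{+}$ is $2 \rho^{-2} \, d d^{c} F \wedge d \rho \wedge d^{c} \rho$, of order $\rho^{-2}$. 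Dividing, one finds
\begin{equation*}
	\Delta_{+} F
	= \rho \cdot \frac{d d^{c} F \wedge d \rho \wedge d^{c} \rho}{d \rho \wedge d^{c} \rho \wedge d d^{c} \rho} + O(\rho^{2}).
\end{equation*}
The denominator equals $- \partial \rho \wedge \overline{\partial} \rho \wedge \partial \overline{\partial} \rho$, whose coefficient is the Levi form and hence nonvanishing on $M$ by strict pseudoconvexity; this also reconfirms that $\omega_{+}$ is a genuine metric near $M$. Consequently $\Delta_{+} F$ is always $O(\rho)$, and $\Delta_{+} F = O(\rho^{2})$ holds if and only if $d d^{c} F \wedge d \rho \wedge d^{c} \rho$ vanishes on $M$.

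It then remains to identify this wedge condition with the vanishing of the $(1,1)$-part of $d d^{c} F |_{M}$. Writing $d \rho \wedge d^{c} \rho = \sqrt{-1} \, \partial \rho \wedge \overline{\partial} \rho$ and $d d^{c} F = \sqrt{-1} \, \partial \overline{\partial} F$, wedging with $\partial \rho \wedge \overline{\partial} \rho$ annihilates every component of $d d^{c} F$ carrying a transverse holomorphic or antiholomorphic factor and keeps only the purely tangential component along $\theta^{1} \wedge \theta^{\overline{1}}$. Pulling back to $M$, using that $\partial \rho$ and $\overline{\partial} \rho$ restrict to $\sqrt{-1} \theta$ and $- \sqrt{-1} \theta$ (so transverse factors become multiples of $\theta$), this tangential component is exactly the $(1,1)$-part of $d d^{c} F |_{M}$ in the sense of \cref{lem:characterization-of-d^c_CR-via-pluriharmonic-extension}. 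Since the Levi factor is nonzero, $d d^{c} F \wedge d \rho \wedge d^{c} \rho |_{M} = 0$ if and only if this $(1,1)$-part vanishes, which gives the desired equivalence.

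Finally, for the last assertion: if $\Delta_{+} F = O(\rho^{2})$, then $d d^{c} F |_{M}$ has vanishing $(1,1)$-part, so by the uniqueness modulo $O(\rho^{2})$ in \cref{lem:characterization-of-d^c_CR-via-pluriharmonic-extension}, $F$ coincides up to $O(\rho^{2})$ with the extension produced there; as $O(\rho^{2})$-terms contribute nothing to $d^{c} ( \cdot ) |_{M}$, we conclude $d^{c} F |_{M} = d^{c}_{\CR} u$. The main obstacle is the asymptotic step: one must verify that the subleading terms of the ACH metric $\omega_{+}$ and of its volume form $\omega_{+}^{2}$ do not disturb the two displayed leading coefficients, and that the quotient is genuinely $O(\rho)$ with the stated coefficient. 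Here the \emph{evenness} of the ACH structure is what guarantees that the expansion runs in integer powers of $\rho$, so that the condition $O(\rho^{2})$ is meaningful.
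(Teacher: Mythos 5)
Your proposal is correct and follows essentially the same route as the paper: the Kähler trace identity $-4\,d d^{c} F \wedge \omega_{+} = (\Delta_{+} F)\,\omega_{+}^{2}$, the extraction of the leading terms $\omega_{+}^{2} = -8\rho^{-3}\, d\rho \wedge d^{c}\rho \wedge d d^{c}\rho + O(\rho^{-2})$ and $d d^{c} F \wedge \omega_{+} = 2\rho^{-2}\, d d^{c} F \wedge d\rho \wedge d^{c}\rho + O(\rho^{-1})$ with strict pseudoconvexity guaranteeing the nonvanishing Levi denominator, and the final appeal to \cref{lem:characterization-of-d^c_CR-via-pluriharmonic-extension}. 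The only cosmetic differences are that you express the quotient as a ratio of top-degree forms where the paper evaluates against a frame $\tensor{\widetilde{Z}}{_{1}}$ of $T^{1,0}X \cap \Ker \partial\rho$, and that you invoke the uniqueness modulo $O(\rho^{2})$ for the last assertion where the paper applies the cited lemma to $F$ directly.
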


\begin{proof}
	We first note that
	\begin{equation}
		- 4 d d^{c} F \wedge \omega_{+}
		= (\Delta_{+} F) \omega_{+}^{2}.
	\end{equation}
	It follows from the definition of $\omega_{+}$ that
	\begin{equation}
		\omega_{+}^{2}
		= - 8 \rho^{- 3} d \rho \wedge d^{c} \rho \wedge d d^{c} \rho + O(\rho^{- 2}).
	\end{equation}
	Take a local frame $\tensor{\widetilde{Z}}{_{1}}$ of $T^{1, 0} X \cap \Ker \partial \rho$,
	and set $\tensor{\widetilde{Z}}{_{\overline{1}}} = \overline{\tensor{\widetilde{Z}}{_{1}}}$.
	Since $M$ is strictly pseudoconvex,
	$d d^{c} \rho (\tensor{\widetilde{Z}}{_{1}}, \tensor{\widetilde{Z}}{_{\overline{1}}})$
	is non-zero near $M$,
	and
	\begin{equation}
		d d^{c} F \wedge \omega_{+}
		= 2 \rho^{- 2} \frac{d d^{c} F(\tensor{\widetilde{Z}}{_{1}}, \tensor{\widetilde{Z}}{_{\overline{1}}})}
			{d d^{c} \rho (\tensor{\widetilde{Z}}{_{1}}, \tensor{\widetilde{Z}}{_{\overline{1}}})}
			d \rho \wedge d^{c} \rho \wedge d d^{c} \rho + O(\rho^{- 1}).
	\end{equation}
	This proves the former statement.
	The latter one is a consequence of \cref{lem:characterization-of-d^c_CR-via-pluriharmonic-extension}.
\end{proof}

We need a solution to the Dirichlet problem for $\Delta_{+}$.
Near $M$,
the Laplacian $\Delta_{+}$ satisfies
\begin{equation}
\label{eq:Laplacian-of-ACH}
	\Delta_{+}
	= - \pqty{\rho \pdv{}{\rho}}^{2} + 2 \rho \pdv{}{\rho}
		+ (\text{a diff.\ op.\ increasing the order in $\rho$});
\end{equation}
c.f.\ \cite{Graham-Lee1988}*{Proposition 2.1}.

\begin{proposition}
\label{prop:Dirichlet-problem}
	For each $u \in C^{\infty}(M)$,
	there exist smooth functions $F$ and $G$ on $\overline{\Omega}$ such that
	$F |_{M} = u$ and $\widetilde{u} = F + G \rho^{2} \log (- \rho)$ satisfies $\Delta_{+} \widetilde{u} = 0$.
\end{proposition}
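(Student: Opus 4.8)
The plan is to construct a formal solution near $M$ and then correct it to an exact one by inverting $\Delta_{+}$ on $L^{2}(\Omega, \omega_{+})$. The model for the whole analysis is the indicial operator read off from \cref{eq:Laplacian-of-ACH}. Since
\begin{equation}
	\bqty{- \pqty{\rho \pdv{}{\rho}}^{2} + 2 \rho \pdv{}{\rho}} \rho^{s}
	= s (2 - s) \rho^{s},
\end{equation}
the indicial roots are $s = 0$ and $s = 2$. The root $s = 0$ carries the Dirichlet datum $u$, whereas $s = 2$ differs from $0$ by a positive integer; this resonance is precisely what forces a logarithmic term into the expansion. The relevant computation is
\begin{equation}
	\bqty{- \pqty{\rho \pdv{}{\rho}}^{2} + 2 \rho \pdv{}{\rho}} \pqty{\rho^{2} \log (- \rho)}
	= - 2 \rho^{2},
\end{equation}
so a term $g \rho^{2} \log(- \rho)$ contributes a pure power $- 2 g \rho^{2}$ to leading order and can absorb the obstruction that appears at the resonant order.

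First I would start from an arbitrary smooth extension of $u$ and solve $\Delta_{+} \widetilde{u} = 0$ order by order in the powers $\rho^{k}$ and $\rho^{k} \log(- \rho)$. At each order $\rho^{k}$ with $k \neq 0, 2$ the factor $k(2 - k)$ is nonzero, so the corresponding Taylor coefficient of $F$ is determined by the lower-order data; at the resonant order $\rho^{2}$ the coefficient of the logarithmic term is fixed by the identity above so as to cancel the obstruction, while the remaining power-series coefficient may be chosen freely. Iterating produces formal power series $F$ and $G$ for which $F + G \rho^{2} \log(- \rho)$ solves $\Delta_{+} \widetilde{u} = 0$ to infinite order along $M$. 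By Borel's lemma these formal series are realized by genuine smooth functions on $\overline{\Omega}$; denote the resulting approximate solution by $\widetilde{u}_{0}$, so that $F |_{M} = u$ and $\Delta_{+} \widetilde{u}_{0} = h$ with $h = O(\rho^{\infty})$.

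It then remains to correct $\widetilde{u}_{0}$ to an exact solution. Since $(\Omega, \omega_{+})$ is complete and asymptotically complex hyperbolic, the bottom of the $L^{2}$-spectrum of $\Delta_{+}$ is strictly positive, whence $\Delta_{+}$ is invertible on $L^{2}(\Omega, \omega_{+})$. As $h = O(\rho^{\infty})$ is rapidly decaying, hence in $L^{2}$, I would set $v = - \Delta_{+}^{-1} h$ and put $\widetilde{u} = \widetilde{u}_{0} + v$, so that $\Delta_{+} \widetilde{u} = 0$. With respect to the volume form $\omega_{+}^{2} \sim (- \rho)^{- 3}$, a boundary behavior $(- \rho)^{s}$ lies in $L^{2}$ exactly when $s > 1$; thus the two indicial roots $0$ and $2$ are separated by the $L^{2}$ threshold, and the $L^{2}$ solution $v$ carries only the subdominant $s = 2$ mode. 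Consequently $v$ has vanishing Dirichlet data and, by the elliptic regularity and polyhomogeneity theory for the asymptotically complex hyperbolic Laplacian, extends smoothly to $\overline{\Omega}$ as the sum of a smooth function that is $O(\rho^{2})$ and a smooth multiple of $\rho^{2} \log(- \rho)$. Absorbing $v$ into $F$ and $G$ yields the desired form of $\widetilde{u}$.

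The main obstacle is this last analytic step, not the bookkeeping of the formal expansion: one must know that $\Delta_{+}$ is invertible on $L^{2}$ and that solutions of $\Delta_{+} v = h$ with polyhomogeneous right-hand side are themselves polyhomogeneous and smooth up to the boundary with the expected leading order. Both facts rest on the parametrix construction for asymptotically complex hyperbolic metrics developed in \cites{Epstein-Melrose-Mendoza1991,Guillarmou-Sa_Barreto2008,Matsumoto2016}; the evenness of $\omega_{+}$ is what guarantees that the expansion of $v$ involves only integer powers of the defining function $\rho$ together with $\log(- \rho)$, rather than fractional or more exotic powers, so that $F$ and $G$ are genuinely smooth on $\overline{\Omega}$.
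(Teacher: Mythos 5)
Your proposal is correct and follows essentially the same route as the paper: an indicial-root analysis producing a formal solution with the resonant $\rho^{2}\log(-\rho)$ term, Borel's lemma to realize it with error in $\rho^{\infty}C^{\infty}(\overline{\Omega})$, and then an exact correction via the inverse of $\Delta_{+}$ from the asymptotically complex hyperbolic theory of \cite{Epstein-Melrose-Mendoza1991}. The only cosmetic difference is that the paper quotes the sharper mapping property $R\colon \rho^{\infty}C^{\infty}(\overline{\Omega}) \to \rho^{2}C^{\infty}(\overline{\Omega})$ (so the correction carries no extra log term), while you justify invertibility via a spectral gap and allow a $\rho^{2}\log(-\rho)$ contribution absorbed into $G$ --- both suffice for the stated proposition.
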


\begin{proof}
	For a smooth function $f$ on $\overline{\Omega}$,
	\cref{eq:Laplacian-of-ACH} implies that
	\begin{align}
		\Delta_{+}(f \rho^{k})
		&= k (2 - k) f \rho^{k} + O(\rho^{k + 1}), \\
		\Delta_{+}(f \rho^{k} \log (- \rho))
		&= k (2 - k) f \rho^{k} \log (- \rho) + 2 (1 - k) f \rho^{k} + O(\rho^{k + 1} \log (- \rho)).
	\end{align}
	By using an inductive argument and Borel's lemma,
	we obtain smooth functions $F_{0}$ and $G$ on $\overline{\Omega}$
	such that $F_{0} |_{M} = u$ and
	\begin{equation}
		H = \Delta_{+} (F_{0} + G \rho^{2} \log (- \rho)) \in \rho^{\infty} C^{\infty}(\overline{\Omega}),
	\end{equation}
	where $\rho^{\infty} C^{\infty}(\overline{\Omega})$ is the space of smooth functions on $\overline{\Omega}$
	that vanish to infinite order on $M$.
	Epstein, Melrose, and Mendoza~\cite{Epstein-Melrose-Mendoza1991} have proved
	that $\Delta_{+}$ has a bounded inverse $R$,
	and it maps $\rho^{\infty} C^{\infty}(\overline{\Omega})$ to $\rho^{2} C^{\infty}(\overline{\Omega})$;
	see also~\cite{Guillarmou-Sa_Barreto2008}*{Section 5.1}.
	If we set $F = F_{0} - R(H)$,
	then we have $F |_{M} = u$ and $\Delta_{+}(F + G \rho^{2} \log (- \rho)) = 0$.
\end{proof}

Note that $\Delta_{+} F = O(\rho^{2})$,
and so $d^{c} F |_{M} = d^{c}_{\CR} u$ by \cref{lem:harmonic-characterization-of-dd^c_CR}.
Now we give a proof of the nonnegativity of the CR Paneitz operator.

\begin{proof}[Proof of \cref{thm:nonnegativity-of-CR-Paneitz-operator}]
	Fix a smooth function $u$ on $M$,
	and let $\widetilde{u} = F + G \rho^{2} \log (- \rho)$ be as in \cref{prop:Dirichlet-problem}.
	Since $d d^{c} \widetilde{u} \wedge \omega_{+} = 0$,
	we have
	\begin{equation}
		d d^{c} \widetilde{u} \wedge d d^{c} \widetilde{u}
		\leq 0
	\end{equation}
	with equality if and only if $d d^{c} \widetilde{u} = 0$.
	The one-form $d^{c} (G \rho^{2} \log (- \rho))$ can be extended continuously to $\overline{\Omega}$
	and vanishes along $M$.
	On the other hand,
	\begin{equation}
	\label{eq:dd^c_CR-of-log-part}
		d d^{c} (G \rho^{2} \log (- \rho))
		\equiv \rho^{2} \log (- \rho) d d^{c} G
			+(2 \rho \log (- \rho) + \rho) d (G d^{c} \rho)
			\quad
			\text{(mod $d \rho$)}.
	\end{equation}
	The right hand side is continuous up to $M$
	and equal to zero on $M$.
	By using Stokes' theorem,
	we have
	\begin{align}
		\int_{M} d^{c}_{\CR} u \wedge d d^{c}_{\CR} u
		&= \int_{M} d^{c} F \wedge d d^{c} F \\
		&= \lim_{\epsilon \to + 0} \int_{\rho = - \epsilon} d^{c} \widetilde{u} \wedge d d^{c} \widetilde{u} \\
		&= \lim_{\epsilon \to + 0} \int_{\rho < - \epsilon} d d^{c} \widetilde{u} \wedge d d^{c} \widetilde{u} \\
		&\leq 0.
	\end{align}
	Hence \cref{lem:cohomological-expression-of-CR-Paneitz} yields
	the nonnegativity of the CR Paneitz operator.
	If the equality holds,
	then $\widetilde{u}$ must be pluriharmonic on $\Omega$,
	and so $d d^{c} \widetilde{u} \wedge d \rho = 0$ on $\Omega$.
	From \cref{eq:dd^c_CR-of-log-part},
	it follows that
	\begin{equation}
		d d^{c} (G \rho^{2} \log (- \rho)) \wedge d \rho
		= \bqty{\rho^{2} \log (- \rho) d d^{c} G
			+(2 \rho \log (- \rho) + \rho) d (G d^{c} \rho)} \wedge d \rho
	\end{equation}
	is continuous up to $M$ and vanishes along $M$.
	Therefore we have $d d^{c} F \wedge d \rho = 0$ on $M$,
	which implies $d d^{c} F |_{M} = d d^{c}_{\CR} u = 0$.
	In particular,
	$u$ is annihilated by $P_{\theta}$
	if and only if $u$ is CR pluriharmonic.
\end{proof}

From the above proof,
we obtain the following

\begin{corollary}
\label{cor:pluriharmonic-extension}
	Any CR pluriharmonic function $u$ on $M$
	admits a smooth pluriharmonic extension $\widetilde{u}$ to $\overline{\Omega}$,
	and $d^{c} \widetilde{u} |_{M} = d^{c}_{\CR} u$ holds.
\end{corollary}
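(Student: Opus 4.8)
The plan is to read the conclusion off the equality case already treated at the end of the proof of \cref{thm:nonnegativity-of-CR-Paneitz-operator}, and then to upgrade the resulting interior pluriharmonicity to smoothness up to $M$. First I would apply \cref{prop:Dirichlet-problem} to $u$, obtaining $F$ and $G$ on $\overline{\Omega}$ with $F|_{M} = u$ and $\widetilde{u} = F + G\rho^{2}\log(-\rho)$ satisfying $\Delta_{+}\widetilde{u} = 0$. Since $u$ is CR pluriharmonic, \cref{lem:formula-of-dd^c_CR} gives $d d^{c}_{\CR} u = 0$, so that $\int_{M} d^{c}_{\CR} u \wedge d d^{c}_{\CR} u = 0$ and we are precisely in the equality case analyzed in that proof. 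That analysis then shows $\widetilde{u}$ is pluriharmonic on $\Omega$, i.e.\ $d d^{c}\widetilde{u} = 0$ there.

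The main obstacle is to promote this interior pluriharmonicity to a genuinely smooth extension on $\overline{\Omega}$, since $\widetilde{u}$ a priori carries the logarithmic term $G\rho^{2}\log(-\rho)$, which is not smooth at $M$. The key point I would exploit is that pluriharmonicity is independent of the metric: from $d d^{c}\widetilde{u} = 0$ we get $d d^{c}\widetilde{u}\wedge\omega = 0$ for the smooth K\"{a}hler form $\omega$ on $X$, so $\widetilde{u}$ is harmonic for the Laplacian $\Delta_{\omega}$, which is a smooth uniformly elliptic operator in a neighborhood of $\overline{\Omega}$. Because $\rho^{2}\log(-\rho)\to 0$ at $M$, the function $\widetilde{u}$ extends continuously to $\overline{\Omega}$ with $\widetilde{u}|_{M} = u \in C^{\infty}(M)$. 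By the maximum principle $\widetilde{u}$ must coincide with the unique solution $v$ of the Dirichlet problem $\Delta_{\omega} v = 0$ in $\Omega$, $v|_{M} = u$, and boundary regularity for this smooth elliptic problem gives $v \in C^{\infty}(\overline{\Omega})$. Hence $\widetilde{u} = v$ is a smooth pluriharmonic extension of $u$.

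Finally, the identity $d^{c}\widetilde{u}|_{M} = d^{c}_{\CR} u$ follows from facts already recorded in the proof of the theorem: $d^{c}F|_{M} = d^{c}_{\CR} u$ by \cref{lem:harmonic-characterization-of-dd^c_CR} (as $\Delta_{+}F = O(\rho^{2})$), while $d^{c}(G\rho^{2}\log(-\rho))$ extends continuously to $\overline{\Omega}$ and vanishes along $M$. I expect the genuinely new work to lie entirely in the smoothness step above, and in particular in verifying that $\widetilde{u}$ is continuous up to $M$ so that Dirichlet uniqueness and boundary regularity for $\Delta_{\omega}$ can be invoked.
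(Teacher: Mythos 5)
Your proposal is correct, and it tracks the paper through the first milestone: apply \cref{prop:Dirichlet-problem}, note $d d^{c}_{\CR} u = 0$ by \cref{lem:formula-of-dd^c_CR}, and land in the equality case of the proof of \cref{thm:nonnegativity-of-CR-Paneitz-operator} to get $d d^{c} \widetilde{u} = 0$ on $\Omega$. Where you genuinely diverge is the boundary-smoothness step. The paper stays inside the asymptotic expansion: from $d d^{c} \widetilde{u} = 0$ it isolates the singular part $2 G \log(-\rho)\, d\rho \wedge d^{c}\rho$ of $d d^{c}(G \rho^{2} \log(-\rho))$, concludes $G|_{M} = 0$, and then invokes the inductive construction in \cref{prop:Dirichlet-problem} (the Taylor coefficients of the log coefficient are determined successively by the leading one) to upgrade this to $G \in \rho^{\infty} C^{\infty}(\overline{\Omega})$, so the original $\widetilde{u}$ is itself smooth up to $M$. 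You instead pass to the ambient nondegenerate metric: $d d^{c} \widetilde{u} = 0$ forces $\Delta_{\omega} \widetilde{u} = 0$ for the smooth K\"ahler Laplacian, $\widetilde{u}$ is continuous up to $M$ with boundary value $u$ since $\rho^{2}\log(-\rho) \to 0$, and the maximum principle identifies $\widetilde{u}$ with the classical Dirichlet solution, whose smoothness on $\overline{\Omega}$ is standard Schauder boundary regularity. Both arguments are complete. Your route buys independence from the fine structure of the log expansion --- you never need the (implicit in the paper) fact that $G|_{M} = 0$ propagates to all orders of $G$ --- at the cost of importing solvability, uniqueness, and boundary regularity for a second elliptic boundary-value problem; the paper's route is self-contained within the ACH machinery already set up and yields the slightly sharper conclusion that the specific solution of \cref{prop:Dirichlet-problem} has no log term at any order. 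For the final identity, the paper applies \cref{lem:characterization-of-d^c_CR-via-pluriharmonic-extension} directly to the now-smooth pluriharmonic $\widetilde{u}$, while you split off $F$ and use \cref{lem:harmonic-characterization-of-dd^c_CR} together with the vanishing of $d^{c}(G\rho^{2}\log(-\rho))$ along $M$; these are equivalent, and both facts are already recorded in the paper.
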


\begin{proof}
	Let $u$ be a CR pluriharmonic function on $M$,
	and $\widetilde{u} = F + G \rho^{2} \log (- \rho)$ be as in \cref{prop:Dirichlet-problem}.
	From the proof of \cref{thm:nonnegativity-of-CR-Paneitz-operator},
	it follows that $\widetilde{u}$ is pluriharmonic on $\Omega$;
	in other words,
	$d d^{c} \widetilde{u} = 0$.
	On the other hand,
	$d d^{c} \widetilde{u}$ is equal to $2 G \log (- \rho) d \rho \wedge d^{c} \rho$
	modulo a two-form continuous up to the boundary.
	Thus we have $G |_{M} = 0$.
	The construction of $G$ implies that $G \in \rho^{\infty} C^{\infty}(\overline{\Omega})$;
	in particular,
	$\widetilde{u}$ is smooth up to the boundary.
	The equality $d^{c} \widetilde{u} |_{M} = d^{c}_{\CR} u$
	follows from $d d^{c} \widetilde{u} = 0$ and \cref{lem:characterization-of-d^c_CR-via-pluriharmonic-extension}.
\end{proof}

We apply \cref{cor:CR-Yamabe-constant},
which follows from \cref{thm:nonnegativity-of-CR-Paneitz-operator}
and~\cite{Cheng-Malchiodi-Yang2017}*{Theorem 1.2},
to give an affirmative solution to the CR Yamabe problem for embeddable CR manifolds.

\begin{proof}[Proof of \cref{thm:CR-Yamabe-problem}]
	If $\mathcal{Y}(M, T^{1, 0} M) < \mathcal{Y}(S^{3}, T^{1, 0} S^{3})$,
	then there exists a CR Yamabe contact form on $M$~\cite{Jerison-Lee1987}*{Theorem 3.4}.
	On the other hand,
	if $\mathcal{Y}(M, T^{1, 0} M) = \mathcal{Y}(S^{3}, T^{1, 0} S^{3})$,
	then $(M, T^{1, 0} M)$ is CR equivalent to $(S^{3}, T^{1, 0} S^{3})$ by \cref{cor:CR-Yamabe-constant},
	and a solution of the CR Yamabe problem exists~\cite{Jerison-Lee1987}*{Theorem 7.2}.
\end{proof}

\section{CR $Q$-curvature in dimension three}
\label{section:CR-Q-curvature}

In this section,
we discuss the zero CR $Q$-curvature problem in dimension three
and some applications.
Let $(M, T^{1, 0} M)$ be a closed three-dimensional strictly pseudoconvex CR manifold
and $\theta$ a contact form on $M$.
Hirachi~\cite{Hirachi1993} has introduced the \emph{CR $Q$-curvature} $Q_{\theta}$ by
\begin{equation}
	Q_{\theta}
	= \frac{1}{6}(\Delta_{b} \Scal - 2 \Im \tensor{A}{_{1}_{1}_{,}^{1}^{1}}).
\end{equation}
He has derived also the divergence formula
\begin{equation}
	Q_{\theta}
	= - \frac{1}{3} \tensor{W}{_{1}_{,}^{1}}
	= - \frac{1}{3} \tensor{W}{_{\overline{1}}_{,}^{\overline{1}}}
\end{equation}
and its transformation law under conformal change $\hat{\theta} = e^{\Upsilon} \theta$:
\begin{equation}
	e^{2 \Upsilon} Q_{\hat{\theta}}
	= Q_{\theta} + P_{\theta} \Upsilon;
\end{equation}
see~\cite{Hirachi1993}*{Lemma 5.4}.
In particular,
the CR $Q$-curvature vanishes for pseudo-Einstein contact forms.

We first discuss orthogonality relations between $Q_{\theta}$ and $\mathcal{P}$.
The integral of the product of a CR pluriharmonic function and the CR $Q$-curvature
has a cohomological expression.

\begin{proposition}
\label{prop:cohomological-interpretation-of-CR-Q-curvature}
	For $u \in \mathcal{P}$,
	\begin{equation} \label{eq:cohomological-interpretation-of-CR-Q-curvature}
		\langle [d^{c}_{\CR} u] \cup c_{1}(T^{1, 0} M), [M] \rangle
		= \frac{3}{2 \pi} \int_{M} u Q_{\theta} \theta \wedge d \theta.
	\end{equation}
\end{proposition}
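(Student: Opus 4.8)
The plan is to reinterpret the left-hand side as an honest integral of a wedge product and then reduce it, via integration by parts, to the divergence formula for $Q_{\theta}$ recalled at the start of this section. Since $u \in \mathcal{P}$, \cref{lem:formula-of-dd^c_CR} gives $d d^{c}_{\CR} u = 0$, so $d^{c}_{\CR} u$ is a closed one-form and defines the class $[d^{c}_{\CR} u] \in H^{1}(M, \mathbb{R})$, as already noted. The lemma that $W(\theta) / 2 \pi$ is a closed representative of $c_{1}(T^{1, 0} M)$ then computes the cup product as a wedge product, so that
\begin{equation}
	\langle [d^{c}_{\CR} u] \cup c_{1}(T^{1, 0} M), [M] \rangle
	= \frac{1}{2 \pi} \int_{M} d^{c}_{\CR} u \wedge W(\theta).
\end{equation}
Hence it suffices to prove $\int_{M} d^{c}_{\CR} u \wedge W(\theta) = 3 \int_{M} u Q_{\theta}\, \theta \wedge d \theta$.

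Next I would compute the integrand pointwise. Wedging the expression \cref{eq:definition-of-d^c_CR} for $d^{c}_{\CR} u$ against $W(\theta)$ from \cref{eq:definition-of-representative}, the $\theta$-component $\tfrac{1}{2} \Delta_{b} u \cdot \theta$ of $d^{c}_{\CR} u$ drops out, since it only meets terms already containing $\theta$; reordering the surviving basis three-forms against $\theta \wedge d \theta = \sqrt{- 1}\, \tensor{l}{_{1}_{\overline{1}}}\, \theta \wedge \tensor{\theta}{^{1}} \wedge \tensor{\theta}{^{\overline{1}}}$ yields
\begin{equation}
	d^{c}_{\CR} u \wedge W(\theta)
	= \frac{1}{2} \pqty{ \tensor{W}{_{1}} \tensor{u}{^{1}} + \tensor{W}{_{\overline{1}}} \tensor{u}{^{\overline{1}}} } \theta \wedge d \theta .
\end{equation}
The remaining step is integration by parts. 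Using the divergence theorem $\int_{M} \tensor{V}{^{1}_{,}_{1}}\, \theta \wedge d \theta = 0$ for any section $V = \tensor{V}{^{1}} \tensor{Z}{_{1}}$ of $T^{1, 0} M$ (valid because $\theta \wedge d \theta$ is parallel for the Tanaka-Webster connection and the pure $(1, 0)$-covariant divergence coincides with the geometric one), together with the divergence formula $Q_{\theta} = - \tfrac{1}{3} \tensor{W}{_{1}_{,}^{1}} = - \tfrac{1}{3} \tensor{W}{_{\overline{1}}_{,}^{\overline{1}}}$, each of the two terms contributes $3 \int_{M} u Q_{\theta}\, \theta \wedge d \theta$; summing and dividing by $2 \pi$ gives the claim.

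The computation is essentially bookkeeping, so I do not expect a deep obstacle; the steps needing care are the sign conventions when reordering the basis three-forms in the wedge product, and the verification that the integration by parts leaves no residual torsion terms. The latter is clean precisely because $\tensor{W}{_{1}}$ already packages the torsion contribution $\tensor{A}{_{1}_{1}_{,}^{1}}$ and $Q_{\theta}$ is exactly its divergence, so the two sides match on the nose. The one genuinely conceptual input is that the CR pluriharmonicity of $u$ is what makes $d^{c}_{\CR} u$ closed, and is therefore exactly the hypothesis required both for $[d^{c}_{\CR} u]$ to define a cohomology class and for the cup-product interpretation of the left-hand side to be legitimate.
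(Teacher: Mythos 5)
Your proposal is correct and follows essentially the same route as the paper: both represent the cup product by the integral $\frac{1}{2\pi}\int_{M} d^{c}_{\CR} u \wedge W(\theta)$ using the lemma that $W(\theta)/2\pi$ represents $c_{1}(T^{1,0}M)$, compute the wedge product to get $\frac{1}{2}\pqty{\tensor{u}{^{1}}\tensor{W}{_{1}} + \tensor{u}{^{\overline{1}}}\tensor{W}{_{\overline{1}}}}\theta \wedge d\theta$, and conclude by integration by parts via the divergence formula $Q_{\theta} = -\frac{1}{3}\tensor{W}{_{1}_{,}^{1}}$. Your write-up even makes explicit two points the paper leaves implicit, namely the divergence theorem justifying the integration by parts and the role of $u \in \mathcal{P}$ in making $d^{c}_{\CR} u$ closed.
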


\begin{proof}
	The definitions of $d^{c}_{\CR}$ \cref{eq:definition-of-d^c_CR} and $W(\theta)$ \cref{eq:definition-of-representative}
	yield that
	\begin{align}
		\int_{M} d^{c}_{\CR} u \wedge W(\theta)
		&= \frac{\sqrt{- 1}}{2} \int_{M}
			\bqty{\tensor{u}{_{\overline{1}}} \tensor{W}{_{1}}
			\tensor{\theta}{^{\overline{1}}} \wedge \theta \wedge \tensor{\theta}{^{1}}
			- \tensor{u}{_{1}} \tensor{W}{_{\overline{1}}}
			\tensor{\theta}{^{1}} \wedge \theta \wedge \tensor{\theta}{^{\overline{1}}}} \\
		&= \frac{1}{2} \int_{M} \bqty{\tensor{u}{^{1}} \tensor{W}{_{1}}
			+ \tensor{u}{^{\overline{1}}} \tensor{W}{_{\overline{1}}}} \theta \wedge d \theta \\
		&= 3 \int_{M} u Q_{\theta} \theta \wedge d \theta.
	\end{align}
	Since $W(\theta) / 2 \pi$ is a representative of $c_{1}(T^{1, 0} M)$,
	we have the desired conclusion.
\end{proof}

\cref{lem:real-part-of-CR-holomorphic} gives the following

\begin{corollary}
	If $u$ is the real part of a CR holomorphic function globally,
	then $Q_{\theta}$ is orthogonal to $u$.
\end{corollary}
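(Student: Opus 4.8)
The plan is to read the conclusion off directly from \cref{prop:cohomological-interpretation-of-CR-Q-curvature}, once the relevant cohomology class is seen to vanish. First I would observe that a function which is globally the real part of a CR holomorphic function is in particular CR pluriharmonic, so $u \in \mathcal{P}$ and the formula \cref{eq:cohomological-interpretation-of-CR-Q-curvature} applies.

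Next I would invoke \cref{lem:real-part-of-CR-holomorphic}: since $u$ is the real part of a CR holomorphic function globally, the cohomology class $[d^{c}_{\CR} u]$ vanishes in $H^{1}(M, \mathbb{R})$. Consequently the cup product $[d^{c}_{\CR} u] \cup c_{1}(T^{1, 0} M)$ is the zero class in $H^{3}(M, \mathbb{R})$, and hence so is its pairing with the fundamental class $[M]$. Substituting $\langle [d^{c}_{\CR} u] \cup c_{1}(T^{1, 0} M), [M] \rangle = 0$ into \cref{eq:cohomological-interpretation-of-CR-Q-curvature} forces
\begin{equation}
	\int_{M} u Q_{\theta} \theta \wedge d \theta = 0,
\end{equation}
which is precisely the assertion that $Q_{\theta}$ is orthogonal to $u$ with respect to the volume form $\theta \wedge d \theta$.

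Since every ingredient is already established, I do not expect any genuine obstacle here; the statement is a formal consequence of chaining \cref{lem:real-part-of-CR-holomorphic} into \cref{prop:cohomological-interpretation-of-CR-Q-curvature}. The only point worth a line of verification is the harmless remark that the global real-part hypothesis places $u$ in $\mathcal{P}$, so that the proposition is applicable in the first place.
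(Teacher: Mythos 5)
Your proposal is correct and matches the paper's own (implicit) argument exactly: the paper states the corollary as an immediate consequence of \cref{lem:real-part-of-CR-holomorphic}, which gives $[d^{c}_{\CR} u] = 0$, fed into the pairing formula of \cref{prop:cohomological-interpretation-of-CR-Q-curvature}, which is precisely your chain. Your added remark that the hypothesis places $u$ in $\mathcal{P}$ is a harmless and accurate point of care.
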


Similarly,
the vanishing of the first Chern class implies the orthogonality of the CR $Q$-curvature to $\mathcal{P}$.

\begin{corollary}
\label{prop:orthogonality-with-vanishing-Chern}
	If $c_{1}(T^{1, 0} M) = 0$ in $H^{2}(M, \mathbb{R})$,
	then $Q_{\theta}$ is orthogonal to $\mathcal{P}$.
\end{corollary}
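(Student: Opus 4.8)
The plan is to read off the statement directly from the cohomological interpretation established in \cref{prop:cohomological-interpretation-of-CR-Q-curvature}. Recall that saying $Q_{\theta}$ is orthogonal to $\mathcal{P}$ means, with respect to the $L^{2}$ pairing induced by the volume form $\theta \wedge d \theta$, that $\int_{M} u Q_{\theta}\, \theta \wedge d \theta = 0$ for every CR pluriharmonic function $u$. So the entire task reduces to showing that this integral vanishes under the hypothesis $c_{1}(T^{1, 0} M) = 0$ in $H^{2}(M, \mathbb{R})$.

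First I would fix an arbitrary $u \in \mathcal{P}$ and form its associated class $[d^{c}_{\CR} u] \in H^{1}(M, \mathbb{R})$, which is well defined since $d d^{c}_{\CR} u = 0$ by \cref{lem:formula-of-dd^c_CR}. The key point is purely cohomological: the cup product $[d^{c}_{\CR} u] \cup c_{1}(T^{1, 0} M)$ lives in $H^{3}(M, \mathbb{R})$ and is bilinear in its two factors, so it is automatically zero as soon as one factor, here $c_{1}(T^{1, 0} M)$, vanishes in cohomology. Consequently the left-hand side $\langle [d^{c}_{\CR} u] \cup c_{1}(T^{1, 0} M), [M] \rangle$ of \cref{eq:cohomological-interpretation-of-CR-Q-curvature} is zero.

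Finally I would invoke \cref{prop:cohomological-interpretation-of-CR-Q-curvature} to transfer this vanishing to the analytic side: the right-hand side $\tfrac{3}{2 \pi} \int_{M} u Q_{\theta}\, \theta \wedge d \theta$ must then vanish as well, and since $u \in \mathcal{P}$ was arbitrary, $Q_{\theta}$ is orthogonal to $\mathcal{P}$. There is no serious obstacle here: all the analytic and geometric content is already packaged into the cohomological formula of the preceding proposition, and the present corollary only exploits the elementary fact that a cup product with a vanishing cohomology class is trivial.
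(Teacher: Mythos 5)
Your proposal is correct and is exactly the argument the paper intends: the corollary is stated without explicit proof precisely because it follows immediately from \cref{prop:cohomological-interpretation-of-CR-Q-curvature} together with the triviality of the cup product $[d^{c}_{\CR} u] \cup c_{1}(T^{1, 0} M)$ when $c_{1}(T^{1, 0} M) = 0$. Your only addition --- noting that $[d^{c}_{\CR} u]$ is well defined via \cref{lem:formula-of-dd^c_CR} --- is a correct and appropriate detail, not a deviation.
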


Now we give a complete solution to the zero CR $Q$-curvature problem
for embeddable CR manifolds.
To this end,
we recall some functional-analytic properties of the CR Paneitz operator obtained by Hsiao.

\begin{theorem}[\cite{Hsiao2015}*{Theorem 1.2}]
\label{thm:functional-analysis-of-CR-Paneitz}
	Let $(M, T^{1, 0} M)$ be a closed embeddable strictly pseudoconvex CR manifold of dimension three
	and $\theta$ a contact form on $M$.
	The maximal closed extension of $P_{\theta}$ is self-adjoint with closed range.
	Moreover,
	if $u \in \Dom P_{\theta} \cap (\Ker P_{\theta})^{\perp}$ satisfies $P_{\theta} u \in C^{\infty}(M)$,
	then $u$ must be smooth.
\end{theorem}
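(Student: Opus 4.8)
The plan is to exploit a factorization of the fourth-order operator $P_{\theta}$ into second-order Kohn Laplacians and then to build a parametrix using the good mapping properties that embeddability forces on these Laplacians. Writing $\mathcal{L} = \overline{\partial}_{b}^{*} \overline{\partial}_{b}$ for the Kohn Laplacian on functions (so that $\mathcal{L} u = - \tensor{u}{_{\overline{1}}^{\overline{1}}}$ up to normalization, with $\Ker \mathcal{L}$ the CR functions) and $\overline{\mathcal{L}} = \partial_{b}^{*} \partial_{b}$ for its conjugate (with $\Ker \overline{\mathcal{L}}$ the conjugate-CR functions), the defining formula for $\tensor{P}{_{1}}$ in \cref{lem:formula-of-dd^c_CR} gives, after raising the index, the identity
\begin{equation*}
	P_{\theta} = \overline{\mathcal{L}} \, \mathcal{L} + (\text{lower-order terms in the Heisenberg sense}).
\end{equation*}
The two factors share the same real principal symbol, a nonnegative function vanishing to second order on the characteristic variety $\Sigma = \Sigma^{+} \sqcup \Sigma^{-}$ of the CR structure, so that $P_{\theta}$ is elliptic in the subelliptic sense away from $\Sigma$; they differ only in their subprincipal parts, which is exactly what makes $\mathcal{L}$ degenerate microlocally along $\Sigma^{+}$ and $\overline{\mathcal{L}}$ along $\Sigma^{-}$. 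Because $M$ is strictly pseudoconvex of dimension three, the two halves $\Sigma^{+}$ and $\Sigma^{-}$ are disjoint, and this disjointness is the geometric fact that drives the whole argument.

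First I would invoke the analytic consequence of embeddability. By Kohn's theorem, a closed embeddable strictly pseudoconvex three-manifold is precisely one for which $\mathcal{L}$ has closed range on $L^{2}(M)$; hence the Szeg\H{o} projection $S$ onto $\Ker \mathcal{L}$ is a well-behaved operator and $\mathcal{L}$ admits a partial inverse $N$ with $\mathcal{L} N = N \mathcal{L} = I - S$, and similarly $\overline{\mathcal{L}} \, \overline{N} = \overline{N} \, \overline{\mathcal{L}} = I - \overline{S}$. The disjointness $\Sigma^{+} \cap \Sigma^{-} = \emptyset$ makes $S$ and $\overline{S}$ Fourier integral operators with disjoint wave-front supports, so that $S \overline{S}$ and $\overline{S} S$ are smoothing and $S + \overline{S}$ is a projection modulo smoothing operators. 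Composing the partial inverses, I would set $Q = N \overline{N}$ and verify, within the Heisenberg calculus and after absorbing the lower-order terms above, the parametrix identities
\begin{equation*}
	Q P_{\theta} \equiv P_{\theta} Q \equiv I - S - \overline{S} \pmod{\text{smoothing}} .
\end{equation*}

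All three assertions then fall out of this parametrix. For the regularity statement, if $u \in \Dom P_{\theta} \cap (\Ker P_{\theta})^{\perp}$ then, since $\operatorname{Ran} S$ and $\operatorname{Ran} \overline{S}$ lie in $\Ker P_{\theta}$, orthogonality forces $S u = \overline{S} u = 0$; applying $Q$ to $P_{\theta} u$ gives $u \equiv Q(P_{\theta} u) \pmod{C^{\infty}}$, and since $Q$ preserves smoothness we conclude $u \in C^{\infty}(M)$ whenever $P_{\theta} u \in C^{\infty}(M)$. For closed range, $\operatorname{Ran} P_{\theta} \supseteq \operatorname{Ran}(I - S - \overline{S})$ modulo a smoothing, hence compact, error, so a standard Fredholm argument together with the formal self-adjointness identifies $\operatorname{Ran} P_{\theta}$ with the closed subspace $(\Ker P_{\theta})^{\perp}$. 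Finally, the subelliptic a priori estimate furnished by $Q$ shows that every element of the maximal domain is approximable in the graph norm by smooth functions; hence the maximal extension coincides with the minimal one and, being formally self-adjoint, is self-adjoint.

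The main obstacle I anticipate is the rigorous construction and composition of the parametrix: the factors $\mathcal{L}$, $\overline{\mathcal{L}}$, $N$, $\overline{N}$, $S$, $\overline{S}$ are not classical pseudodifferential operators near $\Sigma$, so the identities above must be justified in the Heisenberg calculus together with the Boutet de Monvel--Sj\"{o}strand description of the Szeg\H{o} projection, and one must check that the lower-order terms relating $P_{\theta}$ to $\overline{\mathcal{L}} \, \mathcal{L}$ (the torsion and curvature contributions) are genuinely lower order relative to the subelliptic gain and do not obstruct the inversion. The decisive point throughout is the closed range of $\mathcal{L}$: for non-embeddable structures such as the Rossi sphere this fails, $N$ does not exist as a bounded operator, and indeed the conclusion of the theorem breaks down.
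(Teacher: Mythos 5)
First, a point of order: the paper does not prove \cref{thm:functional-analysis-of-CR-Paneitz} at all --- it is quoted as an external result, namely \cite{Hsiao2015}*{Theorem 1.2} --- so your attempt can only be measured against Hsiao's proof. There your overall plan is indeed the right one, and essentially his: factor $P_{\theta}$ through the Kohn Laplacian and its conjugate modulo lower-order torsion terms, use the Kohn/Boutet de Monvel--Sj\"{o}strand input that embeddability gives $\mathcal{L}$ closed range (hence partial inverses $N$, $\overline{N}$ and projectors $S$, $\overline{S}$), and exploit the disjointness of $\Sigma^{+}$ and $\Sigma^{-}$. But your central identity $Q P_{\theta} \equiv P_{\theta} Q \equiv I - S - \overline{S}$ modulo smoothing, with $Q = N \overline{N}$, does not survive the computation it suggests. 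Already at the level of the leading factorization, $P_{\theta} Q \approx \overline{\mathcal{L}} \mathcal{L} N \overline{N} = \overline{\mathcal{L}} (I - S) \overline{N} = I - \overline{S} - \overline{\mathcal{L}} S \overline{N}$, and the operator $\overline{\mathcal{L}} S \overline{N}$ --- microlocally of the form $\overline{\mathcal{L}} S \overline{\mathcal{L}}^{-1}$ near $\Sigma^{+}$, since $\overline{N}$ is a genuine parametrix of $\overline{\mathcal{L}}$ there --- is a zeroth-order Fourier integral operator with the same wave front set as $S$; it is neither smoothing nor congruent to $S$ modulo smoothing, because $S$ does not commute with $\overline{\mathcal{L}}$ even modulo smoothing (the same defect appears in $Q P_{\theta} = I - S - N \overline{S} \mathcal{L}$ on $\Sigma^{-}$). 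Consequently the error term is not $S + \overline{S}$, and --- this is what your argument leans on --- its range is not visibly contained in $\Ker P_{\theta}$, so orthogonality to $\Ker P_{\theta}$ no longer annihilates the error; neither the regularity deduction $u \equiv Q(P_{\theta} u)$ nor the Fredholm argument for closed range goes through as stated. Likewise the torsion terms cannot simply be ``absorbed'': near $\Sigma$ neither factor satisfies the Rockland condition, so there is no Neumann-series inversion available there. Repairing all of this is precisely the content of Hsiao's microlocal Hodge theory for $P_{\theta}$: he constructs the orthogonal projector onto $\Ker P_{\theta}$ directly and shows it agrees with $S + \overline{S}$ only up to lower-order Fourier integral corrections, rather than postulating the form of the error.

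The self-adjointness step is a second genuine gap. You assert that the a priori estimate furnished by $Q$ yields graph-norm density of $C^{\infty}(M)$ in the maximal domain, whence $P_{\max} = P_{\min}$ and self-adjointness; but such a Friedrichs-type approximation is real work for a fourth-order operator degenerate along $\Sigma$ (mollifier commutators lose derivatives exactly in the characteristic directions), and in any case it sits downstream of the unproven parametrix identity. Two smaller items each need a line: $\Ran S \subset \Ker P_{\theta}$ requires density of smooth CR functions in $\Ker \mathcal{L}$ (true in the embeddable case) together with closedness of the maximal extension; and the identification $\Ran P_{\theta} = (\Ker P_{\theta})^{\perp}$ presupposes the self-adjointness still to be established. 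In short: your architecture matches Hsiao's proof and your closing diagnosis of where the difficulty lies is accurate, but the steps you defer to ``verification in the Heisenberg calculus'' are not routine checks --- they are the theorem.
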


Denote by $\Pi_{\theta}$ the orthogonal projection to $\Ker P_{\theta}$.
We write $G_{\theta}$ for the partial inverse of $P_{\theta}$;
that is, $G_{\theta}$ is a bounded linear operator from $L^{2}(M)$ to $\Dom P_{\theta} \cap (\Ker P_{\theta})^{\perp}$ satisfying
\begin{gather}
	u = \Pi_{\theta} u + P_{\theta} G_{\theta} u \qquad u \in L^{2}(M), \\
	u = \Pi_{\theta} u + G_{\theta} P_{\theta} u \qquad u \in \Dom P_{\theta}.
\end{gather}
From the latter statement of \cref{thm:functional-analysis-of-CR-Paneitz},
it follows that $G_{\theta}$ maps $\Ran P_{\theta} \cap C^{\infty}(M)$ to itself,
and $\Pi_{\theta}$ defines a linear operator $C^{\infty}(M) \to \Ker P_{\theta} \cap C^{\infty}(M)$;
in particular,
$\mathcal{P} = \Ker P_{\theta} \cap C^{\infty}(M)$ is dense in $\Ker P_{\theta}$.

\begin{proof}[Proof of \cref{thm:zero-CR-Q-curvature}]
	Fix a contact form $\theta_{0}$ on $M$.
	If $Q_{\theta_{0}}$ is orthogonal to $\mathcal{P}$,
	then
	\begin{equation}
		\theta = \exp(- G_{\theta_{0}} Q_{\theta_{0}}) \cdot \theta_{0}
	\end{equation}
	is a contact form with zero CR $Q$-curvature.
	Hence it suffices to show
	\begin{equation}
		\langle [d^{c}_{\CR} u] \cup c_{1}(T^{1, 0} M), [M] \rangle
		= 0
	\end{equation}
	for any $u \in \mathcal{P}$.
	Take a smooth pluriharmonic extension $\widetilde{u}$ of $u$ to $\overline{\Omega}$
	by \cref{cor:pluriharmonic-extension}.
	Fix a Hermitian metric on $T^{1, 0} \overline{\Omega}$,
	and denote by $\Psi$ the corresponding first Chern form.
	By applying Stokes' theorem,
	we have
	\begin{equation}
		0
		= \int_{\Omega} d d^{c} \widetilde{u} \wedge \Psi
		= \int_{M} d^{c}_{\CR} u \wedge \Psi |_{M}
		= \langle [d^{c}_{\CR} u] \cup c_{1}(T^{1, 0} M), [M] \rangle;
	\end{equation}
	here we use the fact that $\Psi |_{M}$ is a representative
	of $c_{1}(T^{1, 0} M) = c_{1}(T^{1, 0} \overline{\Omega} |_{M})$.
	If $\hat{\theta} = e^{\Upsilon} \theta$ is also a contact form with zero CR $Q$-curvature,
	then $\Upsilon$ is annihilated by $P_{\theta}$,
	and so it is CR pluriharmonic by \cref{thm:nonnegativity-of-CR-Paneitz-operator}.
\end{proof}

Case and Yang~\cite{Case-Yang2013} have defined the \emph{$Q$-prime curvature} $Q^{\prime}_{\theta}$ by
\begin{equation}
	Q^{\prime}_{\theta}
	= \frac{1}{2} \Scal^{2} - 2 \abs{\tensor{A}{_{1}_{1}}}^{2} + \Delta_{b} \Scal.
\end{equation}
For any contact forms $\theta$ and $\hat{\theta} = e^{\Upsilon} \theta$,
the integral of the $Q$-prime curvature satisfies
\begin{equation}
\label{eq:transformation-law-of-total-Q-prime}
	\int_{M} Q^{\prime}_{\hat{\theta}} \hat{\theta} \wedge d \hat{\theta}
	= \int_{M} Q^{\prime}_{\theta} \theta \wedge d \theta
	+ 6 \int_{M} \bqty{\Upsilon (P_{\theta} \Upsilon) + 2 Q_{\theta} \Upsilon} \theta \wedge d \theta;
\end{equation}
see~\cite{Case-Yang2013}*{Proposition 6.1}.
In particular,
a discussion in~\cite{Case-Yang2013} yields that
the integral of the $Q$-prime curvature
\begin{equation}
	\overline{Q}^{\prime}(M, T^{1, 0} M)
	= \int_{M} Q^{\prime}_{\theta} \theta \wedge d \theta
\end{equation}
is independent of the choice of a contact form $\theta$ with zero CR $Q$-curvature,
and defines a CR invariant of $(M, T^{1, 0} M)$,
which we call the \emph{total $Q$-prime curvature}.
By \cref{thm:zero-CR-Q-curvature},
the total $Q$-prime curvature is always well-defined for closed embeddable CR three-manifolds.
Now we give a proof of \cref{thm:Einstein-condition},
which relates the total $Q$-prime curvature with the CR Yamabe constant.

\begin{proof}[Proof of \cref{thm:Einstein-condition}]
	Fix a contact form $\theta$ with zero CR $Q$-curvature.
	\cref{thm:CR-Yamabe-problem} implies that
	there exists $\Upsilon \in C^{\infty}(M)$ such that
	$\hat{\theta} = e^{\Upsilon} \theta$ satisfies
	\begin{equation}
		\widehat{\Scal}
		= \mathcal{Y}(M, T^{1, 0} M),
		\qquad
		\int_{M} \hat{\theta} \wedge d \hat{\theta}
		= 1.
	\end{equation}
	Then
	\begin{align}
		\int_{M} Q^{\prime}_{\hat{\theta}} \hat{\theta} \wedge d \hat{\theta}
		&= \frac{1}{2} \int_{M} (\widehat{\Scal})^{2} \hat{\theta} \wedge d \hat{\theta}
			- 2 \int_{M} \abs{\tensor{\widehat{A}}{_{1}_{1}}}^{2} \hat{\theta} \wedge d \hat{\theta} \\
		&\leq \frac{1}{2} \mathcal{Y}(M, T^{1, 0} M)^{2}
	\end{align}
	On the other hand,
	it follows from \cref{eq:transformation-law-of-total-Q-prime} and the nonnegativity of $P_{\theta}$ that
	\begin{align}
		\int_{M} Q^{\prime}_{\hat{\theta}} \hat{\theta} \wedge d \hat{\theta}
		&= \overline{Q}^{\prime}(M, T^{1, 0} M) + 6 \int_{M} \Upsilon (P_{\theta} \Upsilon) \theta \wedge d \theta \\
		&\geq \overline{Q}^{\prime}(M, T^{1, 0} M).
	\end{align}
	Therefore we get the desired inequality.
	Moreover,
	if the equality holds,
	then $\hat{\theta}$ has vanishing Tanaka-Webster torsion,
	and it is a pseudo-Einstein contact form since $\widehat{\Scal}$ is constant.
	Conversely,
	suppose that $(M, T^{1, 0} M)$ admits a pseudo-Einstein contact form $\theta$
	with vanishing Tanaka-Webster torsion;
	in this case, the Tanaka-Webster scalar curvature is constant.
	Without loss of generality,
	we may assume that $\int_{M} \theta \wedge d \theta = 1$.
	From \cref{thm:CR-Yamabe-problem} and \cite{Wang2015}*{Theorem 4},
	it follows that $\theta$ is a CR Yamabe contact form,
	and so the equality holds.
\end{proof}

\cref{thm:Einstein-condition} gives a generalization of~\cite{Case-Yang2013}*{Theorem 1.1}
to embeddable CR manifolds.

\begin{proof}[Proof of \cref{cor:characterization-of-sphere}]
	Since $0 \leq \mathcal{Y}(M, T^{1, 0} M) \leq \mathcal{Y}(S^{3}, T^{1, 0} S^{3})$,
	we have
	\begin{align}
		\mathcal{Y}(M, T^{1, 0} M)^{2}
		&\leq \mathcal{Y}(S^{3}, T^{1, 0} S^{3})^{2} \\
		&= 2 \overline{Q}^{\prime}(S^{3}, T^{1, 0} S^{3});
	\end{align}
	here the last equality follows from \cref{thm:Einstein-condition}
	and the fact that the standard contact form on $S^{3}$
	is a pseudo-Einstein contact form with vanishing Tanaka-Webster torsion.
	Moreover,
	if the equality holds,
	then $\mathcal{Y}(M, T^{1, 0} M) = \mathcal{Y}(S^{3}, T^{1, 0} S^{3})$,
	and $(M, T^{1, 0} M)$ is CR equivalent to $(S^{3}, T^{1, 0} S^{3})$ by \cref{cor:CR-Yamabe-constant}.
\end{proof}

\section{Logarithmic singularity of the Szeg\H{o} kernel}
\label{section:logarithmic-singularity-of-the-Szego-kernel}

Let $\Omega$ be a strictly pseudoconvex domain
in a two-dimensional complex manifold $X$.
We recall some facts on Fefferman defining functions;
see~\cite{Hislop-Perry-Tang2008}*{Section 2D} for example.
For a local coordinate $z = (z^{1}, z^{2})$ of $X$,
set
\begin{equation}
	\mathcal{J}_{z}[\varphi]
	= - \det
		\begin{pmatrix}
			\varphi & \pdvf{\varphi}{\overline{z}^{b}} \\
			\pdvf{\varphi}{z^{a}} & \partial^{2} \varphi / \partial z^{a} \partial \overline{z}^{b}
		\end{pmatrix}
\end{equation}
If $w = F(z)$ is another local coordinate,
then
\begin{equation}
	\mathcal{J}_{w}[\varphi]
	= \abs{\det F^{\prime}}^{2} \mathcal{J}_{z}[\varphi],
\end{equation}
where $F^{\prime}$ is the holomorphic Jacobi matrix of $F$.
A \emph{Fefferman defining function}
is a defining function $\rho$ of $\Omega$
such that
\begin{equation}
\label{eq:def-of-Fdf}
	d d^{c} \log \mathcal{J}_{z}[\rho]
	= d d^{c} O(\rho^{3})
\end{equation}
for any local coordinate $z$.
The condition \cref{eq:def-of-Fdf} is equivalent to 
that, for any $p \in \partial \Omega$,
there exists a local coordinate $z$ near $p$ such that
\begin{equation}
	\mathcal{J}_{z}[\rho]
	= 1 + O(\rho^{3}).
\end{equation}
It is known that $\theta$ is a pseudo-Einstein contact form on $\partial \Omega$
if and only if $\theta = d^{c} \rho |_{\partial \Omega}$ for a Fefferman defining function $\rho$ of $\Omega$;
see \cite{Hirachi1993}*{Lemma 7.2} and \cite{Hislop-Perry-Tang2008}*{Proposition 2.10} for example.
We say $\partial \Omega$ to be \emph{obstruction flat}
if we can take a defining function $\rho$ of $\Omega$ such that
$d d^{c} \log \mathcal{J}_{z}[\rho]$ vanishes to infinite order near a given point on $\partial \Omega$.

Now we give a necessary and sufficient condition
for the vanishing of $\psi_{\theta}$ in \cref{eq:aymptotic-expansion-of-Szego-kernel} to higher-order on the boundary.

\begin{proof}[Proof of \cref{thm:vanishing-of-logarithmic-singularity-of-Szego-kernel}]
	From \cref{eq:boundary-value-of-logarithmic-singularity},
	it follows that $\psi_{\theta} |_{\partial \Omega} = 0$ is equivalent to $Q_{\theta} = 0$.
	Since $\partial \Omega$ has a pseudo-Einstein contact form,
	$\psi_{\theta} |_{\partial \Omega} = 0$ if and only if $\theta$ is pseudo-Einstein
	by \cref{thm:zero-CR-Q-curvature}.
	In what follows,
	we suppose that $\theta$ is pseudo-Einstein.
	Take a Fefferman defining function $\rho$ with $\theta = d^{c} \rho |_{\partial \Omega}$.
	For a point $p \in \partial \Omega$,
	take a local coordinate $z$ near $p$ such that
	$\mathcal{J}_{z}[\rho] = 1 + O(\rho^{3})$.
	Hirachi, Komatsu, and Nakazawa~\cite{Hirachi-Komatsu-Nakawaza1993} have computed
	$\psi_{\theta}$ up to second order in this setting.
	From~\cite{Graham1987}*{Proposition 1.8}
	and~\cite{Hirachi-Komatsu-Nakawaza1993}*{Proposition 1 and Theorem 1},
	it follows that $\psi_{\theta} = O(\rho^{2})$ if and only if $\partial \Omega$ is obstruction flat.
	On the other hand,
	we derive from~\cite{Hirachi-Komatsu-Nakawaza1993}*{Remark 2}
	that $\psi_{\theta} = O(\rho^{3})$ if and only if $\partial \Omega$ is spherical.
\end{proof}

\section*{Acknowledgements}
The author is grateful to Kengo Hirachi for valuable suggestions
on the logarithmic singularity of the Szeg\H{o} kernel.
He would like to express his gratitude to Jeffrey Case, Sagun Chanillo, and Paul Yang for helpful comments.
He would like to thank the referees for their comments also,
which are helpful for the improvement of the manuscript.

\bibliography{my-reference,my-reference-preprint}

\end{document}